\documentclass{article}
\newcommand\minput[1]{{\color{blue}\input{#1}}}
\newcommand{\cp}{\, \square \,}
\renewcommand\minput[1]{}
\usepackage[utf8]{inputenc}
\usepackage{amsthm}
\usepackage{amsfonts}
\usepackage{amsmath}
\usepackage{tikz}
\usepackage{tkz-berge}
\usepackage{boxedminipage}

\newtheorem{theorem}{Theorem}[section]
\newtheorem{conjecture}[theorem]{Conjecture}
\newtheorem{corollary}[theorem]{Corollary}

\newtheorem{lemma}[theorem]{Lemma}
\newtheorem{proposition}[theorem]{Proposition}

\newtheorem{question}{Question}

\newtheorem{remark}[theorem]{Remark}
\theoremstyle{definition}
\newtheorem{definition}[theorem]{Definition}

\begin{document}

\title{Sequence b-colorings in graphs}

\author{
Marko Jakovac $^{a,b,}$\thanks{\texttt{marko.jakovac@um.si} (corresponding author)}
\and Michael S.\ Lang $^{c,}$\thanks{\texttt{mlang@bradley.edu}} 
}

\maketitle

\begin{center}
$^a$ Faculty of Natural Sciences and Mathematics, University of Maribor, Maribor, Slovenia \\
\medskip
$^b$ Institute of Mathematics, Physics and Mechanics, Ljubljana, Slovenia\\
\medskip
$^c$ Mathematics Department, Bradley University, Peoria, Illinois, USA\\
\medskip
\end{center}

\begin{abstract}
We introduce and begin the study of sequence b-colorings, a natural generalization of the classical notion of b-colorings introduced by Irving and Manlove in 1999. In a sequence b-coloring, each color class is required to contain a prescribed minimum number of color-dominating vertices (CDVs). We establish several fundamental properties of the associated parameters, prove that every sequence is realizable, and show that the problem of deciding whether a particular graph realizes a particular sequence is NP-complete. We also characterize the sequences realized by cycles, obtain results on regular graphs with prescribed girth, and investigate colorings requiring one additional CDV, including a characterization of connected graphs with chromatic number $3$ for which no such coloring exists.
\end{abstract}

\vspace{5mm}

\noindent
{\bf Keywords:} b-coloring; b-chromatic number; sequence b-coloring; color-dominating vertex; graph coloring; regular graph; girth; NP-completeness

\vspace{5mm}

\noindent
{\bf AMS Subj.\ Class.\ (2020)}: 05C15

%-----------------
\section{Introduction}
%-----------------

The b-chromatic number was introduced by Irving and Manlove \cite{Irving} as a parameter related to proper colorings and the chromatic number. A vertex of a proper coloring is said to be color-dominating if every color used in the coloring appears in its closed neighborhood. A b-coloring is a proper coloring in which every color class contains at least one color-dominating vertex (CDV), while the b-chromatic number is the largest number of colors for which such a coloring exists. It is worth noting that the smallest number of colors in a b-coloring is precisely the chromatic number of the graph. Thus, the chromatic number and the b-chromatic number are the minimum and maximum numbers of colors in a b-coloring, respectively.

Since its introduction, the b-chromatic number has become a well-established topic in graph coloring. Already Irving and Manlove \cite{Irving} established the fundamental properties of the parameter, proved that its determination is NP-hard in general, 
and presented a polynomial-time algorithm for trees. Since then, the parameter has been studied in numerous graph classes and from several different perspectives. Among the most investigated directions are regular graphs, where the existence of b-colorings with the maximum possible number of colors has been studied extensively by several authors \cite{BlidiaRegular,Cabello,Dettlaff,SahiliRegular,ShaebaniRegNoQuad}. The parameter has also been investigated for graph products, graph powers, graph modifications, and many special graph classes. We refer the reader to the survey \cite{JakovacSurvey} and the references therein for a broader overview of the area.

In recent years, several variants of the original concept have appeared. Examples include the acyclic b-chromatic number \cite{Anholcer}, the star b-chromatic number \cite{Bozovic}, b-greedy colorings and z-colorings \cite{Costa}, and total b-chromatic colorings \cite{Mendoza}. Although these concepts arise from different coloring models, they all retain the same fundamental requirement: every color class must contain at least one CDV.

The present paper is motivated by the observation that the requirement of having one CDV in each color class is somewhat arbitrary. Once a coloring contains one CDV of a given color, it is natural to ask whether additional CDVs of the same color may also exist and, more generally, how many CDVs should be required in each color class. This leads naturally to sequences of positive integers. Instead of requiring one CDV in each color class, we prescribe how many CDVs need to appear in each color class. In this way we obtain a family of coloring parameters that extends the classical theory of b-colorings. The original b-chromatic number appears as a special case corresponding to the constant sequence consisting entirely of $1$s.

The aim of this paper is to initiate the study of these sequence b-colorings. We introduce the basic concepts and investigate their fundamental properties. We define a natural extension of the classical $m$-degree, establish corresponding upper bounds, prove that every sequence can be realized by some graph, and show that the problem of deciding whether a given graph realizes a given sequence is NP-complete. Besides the general theory, we also investigate several natural special cases. In particular, we consider colorings in which the classical b-coloring requirement is strengthened only slightly, by requiring the existence of one additional CDV beyond those required in an ordinary b-coloring. This seemingly modest modification already exhibits interesting behavior and leads to several questions that appear to be closely related to the underlying structure of b-colorings.

The paper is organized as follows. In the next section we introduce notation and definitions together with several preliminary observations. We then study realizability questions and the computational complexity of sequence b-coloring. The following section studies cycles, the simplest nontrivial class of regular graphs. Although their structure is very simple, obtaining a complete characterization of the sequences they realize is a nontrivial problem. This provides motivation and intuition for the more general results on regular graphs presented in the subsequent section. After investigating colorings requiring one additional CDV, we conclude with suggestions for further research.

%-----------------
\section{Definitions and initial results}\label{s:defs}
%-----------------

We consider only finite simple connected graphs. Let $G$ be a graph. We use $V(G)$ and $E(G)$ to denote its vertex set and edge set, respectively. The complement of $G$ is denoted by $\overline{G}$; two vertices are adjacent in $\overline{G}$ if and only if they are nonadjacent in $G$. For a vertex $x\in V(G)$, the open neighborhood of $x$ is denoted by $N(x)$ and consists of all vertices adjacent to $x$. The closed neighborhood of $x$ is denoted by $N[x]$ and is defined by $N[x]=N(x)\cup\{x\}$.
The degree of a vertex $x$ is denoted by $d_G(x)$.
The minimum and maximum degrees of $G$ are denoted by $\delta(G)$ and $\Delta(G)$, respectively. The girth of $G$, denoted by $g(G)$, is the length of a shortest cycle in $G$.
We use $K_n$, $C_n$ and $P_n$ to denote the complete graph, cycle and path, respectively, on $n$ vertices.
For graphs $G$ and $H$, the Cartesian product $G\cp H$ is the graph with vertex set $V(G\cp H)=V(G)\times V(H)$, where two vertices $(u,v)$ and $(u',v')$ are adjacent whenever either $u=u'$ and $vv'\in E(H)$, or $v=v'$ and $uu'\in E(G)$.

A proper coloring of a graph is an assignment of colors to its vertices such that adjacent vertices receive different colors. The chromatic number of a graph $G$, denoted by $\chi(G)$, is the minimum number of colors in a proper coloring of $G$.
A vertex is called a color-dominating vertex (or CDV for short) if its closed neighborhood contains all colors used in the coloring. A b-coloring is a proper coloring in which every color class contains at least one CDV. It is worth noting that any proper coloring of $G$ with $\chi(G)$ colors is necessarily a b-coloring of $G$. Indeed, if some color class contained no CDV, then each vertex of that color would miss at least one color in its closed neighborhood. Consequently, each vertex of that color class could be recolored with a color missing from its closed neighborhood, producing a proper coloring with fewer than $\chi(G)$ colors, a contradiction. Thus, the chromatic number is the minimum number of colors in a b-coloring. Finally, the b-chromatic number of $G$, denoted by $\varphi(G)$, is the maximum number of colors in a b-coloring of $G$.

Throughout the paper, a sequence means a non-increasing (finite or infinite) sequence of positive integers. Given a sequence $S$, its $i$th term is denoted by $s_i$. We use the notation $a^r$ to indicate that the term $a$ is repeated $r$ times, i.e., $(3,2^2,1^3)=(3,2,2,1,1,1)$. Similarly, $a^\infty$ denotes the infinite constant sequence whose every term equals $a$, i.e., $(a^\infty)=(a,a,a,\ldots)$. For two sequences $S$ and $T$ of the same length, we write $S\leq T$ if $s_i\leq t_i$ for every index $i$.

\begin{definition}\label{d:sequence}
Let $S=(s_1,\ldots,s_k)$, where $k\ge 1$, be a finite sequence and let $G$ be a graph.
Consider a b-coloring of $G$ with $k$ colors.
We say that this coloring {\em realizes} $S$ if the $i$th color class contains at least $s_i$ CDVs for each $i\in\{1,\ldots,k\}$.
We consider $G$ itself to realize $S$ if it admits such a b-coloring.
If $S=(s_1,s_2,\ldots)$ is an infinite sequence, then we say $G$ realizes $S$ if it realizes $(s_1,\ldots,s_k)$ for at least one integer $k$. A realization of a sequence $S$ is called {\em maximal} if the graph $G$ does not realize any sequence $T>S$.
\end{definition}

Not every graph realizes every sequence. For example, to properly color the complete graph $K_n$, one must give each vertex a different color. Since each color class consists of a single vertex, each color class contains exactly one CDV. Consequently, the only finite sequence that $K_n$ realizes is $(1^n)$.

\begin{definition}
Given an infinite sequence $S$, the set of all integers $k$ for which a graph $G$ realizes $(s_1,\ldots,s_k)$ is called the {\em $S$-spectrum} of $G$.
\end{definition}

\begin{definition}
Let $G$ be a graph and let $S$ be a sequence that is realized by $G$. We denote the minimum and maximum elements of the $S$-spectrum of $G$ by $\chi_S(G)$ and $\varphi_S(G)$, respectively.
\end{definition}

The constant sequence $(1^\infty)$ corresponds to the classical notion of b-coloring. Hence,
$$
\chi_{(1^\infty)}(G)=\chi(G) \qquad \mbox{and}\qquad \varphi_{(1^\infty)}(G)=\varphi(G).
$$
Thus, sequence b-colorings naturally extend the classical theory. Before turning to the main results, we establish several basic properties of sequence b-colorings. The following propositions, remarks and examples illustrate some immediate consequences of the definitions and provide further insight into the behavior of the newly introduced parameters.

\begin{proposition}\label{p:ineqs}
Let $G$ be a graph and suppose that $G$ realizes sequences $S$ and $T$. If $S\leq T$, then
$$\chi(G) \le \chi_S(G) \le \chi_T(G) \le \varphi_T(G) \le \varphi_S(G) \le \varphi(G) \le \Delta(G)+1.$$
\end{proposition}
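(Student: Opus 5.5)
The plan is to prove the chain one inequality at a time, working directly from the definitions. The key observation is a monotonicity principle. If $S\le T$ and a coloring realizes $(t_1,\ldots,t_k)$, then the same coloring realizes $(s_1,\ldots,s_k)$, because each color class contains at least $t_i\ge s_i$ CDVs. Here $S$ and $T$ are infinite sequences, or at least sequences long enough that the relevant prefixes make sense, and $S\le T$ is read termwise on the common length. It follows that the $T$-spectrum of $G$ is contained in the $S$-spectrum of $G$. The same principle with $S$ replaced by $(1^\infty)$ puts the $S$-spectrum inside the $(1^\infty)$-spectrum, since every sequence of positive integers dominates $(1^\infty)$. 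The $(1^\infty)$-spectrum is exactly the set of $k$ for which $G$ has a b-coloring with $k$ colors.

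Taking minima and maxima over these nested nonempty sets gives the inequalities at once. Nonemptiness holds because $G$ realizes $S$ and $T$. From $\mathrm{spec}_T\subseteq\mathrm{spec}_S\subseteq\mathrm{spec}_{(1^\infty)}$ we get $\chi_{(1^\infty)}(G)\le\chi_S(G)\le\chi_T(G)$ and $\varphi_T(G)\le\varphi_S(G)\le\varphi_{(1^\infty)}(G)$. Since the minimum of a nonempty set is at most its maximum, $\chi_T(G)\le\varphi_T(G)$. Finally, $\chi_{(1^\infty)}(G)=\chi(G)$ and $\varphi_{(1^\infty)}(G)=\varphi(G)$, as noted in the paper: any $\chi(G)$-coloring is a b-coloring, and no proper coloring uses fewer colors.

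The last inequality, $\varphi(G)\le\Delta(G)+1$, is the classical bound. Take a b-coloring with $k$ colors and let $v$ be any CDV. Then $N[v]$ contains all $k$ colors, so $k\le |N[v]|=d_G(v)+1\le\Delta(G)+1$.

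No step is a real obstacle. The only care needed is in handling finite versus infinite sequences, so that spectra are well defined, and in confirming that $\chi_S$ and $\varphi_S$ exist. Existence follows because the spectrum is nonempty and bounded above by $\Delta(G)+1$.
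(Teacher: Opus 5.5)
Your proof is correct and follows the paper's argument. The key step in both is that a coloring realizing (a prefix of) $T$ also realizes the corresponding prefix of $S$, so the $T$-spectrum lies inside the $S$-spectrum; you simply spell out more than the paper does, by treating $\chi$ and $\varphi$ as $\chi_{(1^\infty)}$ and $\varphi_{(1^\infty)}$ and by proving $\varphi(G)\le\Delta(G)+1$ from the degree of a CDV, where the paper says these follow directly from the definitions.
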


\begin{proof}
The inequalities $\chi(G)\leq\chi_S(G)$, $\chi_T(G)\leq\varphi_T(G)$ and $\varphi_S(G)\leq\varphi(G)\leq\Delta(G)+1$ follow directly from the definitions.
Since $S\le T$, a coloring of $G$ that realizes $T$ also realizes $S$.
Hence $\chi_S(G)\le \chi_T(G)$ and $\varphi_T(G)\le \varphi_S(G)$, which completes the proof.
\end{proof}

\medskip

In the classical theory, Irving and Manlove \cite{Irving} introduced the parameter $m(G)$, also called the $m$-degree of a graph $G$, as a useful upper bound for the b-chromatic number. The definition of $m(G)$ is based on the number of vertices of sufficiently large degree. We now introduce a natural analogue for sequence b-colorings. For a graph $G$ and an integer $k$, let
$$n_k(G)=\bigl|\{x\in V(G) :\ d_G(x)\ge k-1\}\bigr|.$$
Since every CDV in a b-coloring with $k$ colors has degree at least $k-1$, the number of CDVs required by a sequence imposes an immediate restriction on the graph.

\begin{remark}
\label{r:ms}
If a graph $G$ realizes $(s_1,\ldots,s_k)$, then
$$\sum_{i=1}^{k}s_i\leq n_k(G).$$
\end{remark}

Motivated by Remark~\ref{r:ms}, we define the following extension of the classical $m$-degree.

\begin{definition}
Let $G$ be a graph and let $S$ be a sequence. The {\em sequential $m$-degree} of $G$ with respect to $S$ is defined by
$$m_S(G)=\max\left\{k :\ \sum_{i=1}^{k}s_i\leq n_k(G)\right\}.$$
\end{definition}

We will also refer to the parameter $m_S(G)$ simply as the $m_S$-degree of $G$. Observe that if $S=(1^\infty)$, then $m_S(G)=m(G)$. Thus, the sequential $m$-degree is a natural extension of the classical $m$-degree.

\begin{proposition}
For every graph $G$ and every sequence $S$, it follows that 
$$\varphi_S(G)\leq m_S(G).$$
\end{proposition}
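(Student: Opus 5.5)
The plan is to apply Remark~\ref{r:ms} directly to $k=\varphi_S(G)$ and then read off the inequality from the definition of $m_S(G)$. Set $k=\varphi_S(G)$; this is defined because $G$ realizes $S$, so the $S$-spectrum is nonempty. It is also finite, since every element of the spectrum is a number of colors in a b-coloring and hence at most $\Delta(G)+1$, as in Proposition~\ref{p:ineqs}. By the definition of the spectrum, $G$ realizes the truncation $(s_1,\ldots,s_k)$. Fix a b-coloring with $k$ colors in which the $i$th color class contains at least $s_i$ CDVs.

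Next I would recall why Remark~\ref{r:ms} holds, so the dependence on $k$ is explicit. A CDV $x$ sees all $k$ colors in $N[x]$. Its own color accounts for one of them, so at least $k-1$ distinct colors appear on $N(x)$, which forces $d_G(x)\ge k-1$. Color classes are disjoint, so the CDVs counted in different classes are distinct vertices. Hence there are at least $\sum_{i=1}^k s_i$ distinct vertices of degree at least $k-1$, that is, $\sum_{i=1}^k s_i\le n_k(G)$.

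Finally, $k$ therefore belongs to the set $\{k' : \sum_{i=1}^{k'} s_i\le n_{k'}(G)\}$. Taking the maximum gives $\varphi_S(G)=k\le m_S(G)$.

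I do not expect a real obstacle. The one point to check is that the maximum defining $m_S(G)$ is attained. The set is nonempty because it contains $k$. It is bounded because each $s_i\ge 1$, so membership of $k'$ requires $k'\le \sum_{i=1}^{k'}s_i\le n_{k'}(G)\le |V(G)|$. For a finite sequence $S$ of length $\ell$, the index $k'$ also only ranges up to $\ell$, which does not affect the argument.
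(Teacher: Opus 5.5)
Your proof is correct and is essentially the paper's argument. The paper argues by contradiction: a realization with $k>m_S(G)$ colors would force $\sum_{i=1}^k s_i>n_k(G)$, which is impossible by the CDV degree count. You run the same count directly to show that $\varphi_S(G)$ lies in the set whose maximum defines $m_S(G)$, and your checks that the spectrum and this set are nonempty and bounded are details the paper leaves implicit.
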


\begin{proof}
Suppose, to the contrary, that $\varphi_S(G)>m_S(G)$. Then $G$ realizes $(s_1,\ldots,s_k)$ for some $k>m_S(G)$. Hence there exists a b-coloring of $G$ with $k$ colors in which the $i$th color class contains at least $s_i$ CDVs for each $i\in\{1,\ldots,k\}$. By the definition of $m_S(G)$, since $k>m_S(G)$, we have
$$\sum_{i=1}^{k}s_i>n_k(G).$$
Thus the coloring would require more than $n_k(G)$ CDVs. However, each CDV in a b-coloring with $k$ colors must have degree at least $k-1$, since it must be adjacent to at least one vertex in each of the other $k-1$ color classes. Hence all CDVs required by the sequence must belong to the set of vertices counted by $n_k(G)$. This is impossible, because there are only $n_k(G)$ such vertices.
Therefore no such $k$ exists, and consequently $\varphi_S(G)\leq m_S(G)$.
\end{proof}

\medskip

Immediately from the definition of $m_S$-degree we also get $m_T(G)\leq m_S(G)$ if $S\leq T$. However, the relationship among $\varphi_S(G)$, $m_S(G)$ and the classical b-chromatic number is somewhat subtle, as illustrated by the following examples. Let $G=K_5\cp K_2$ and let $S=(3^\infty)$. Then $\varphi_S(G)=2$, $m_S(G)=3$ and $\varphi(G)=5$. On the other hand, if $G=C_4$ and $S=(2,1^\infty)$, then $\varphi_S(G)=\varphi(G)=2$ and $m_S(G)=3$. These examples show that the parameter $m_S(G)$ does not fit naturally into a chain of inequalities involving $\varphi_S(G)$ and $\varphi(G)$.

The concepts introduced above lead to many questions, including the following.
Which sequences can be realized at all, and by which graphs?
To what extent do results from the classical theory have analogues here?
The next section begins to address some aspects of these questions by establishing general realizability results and investigating the computational complexity of sequence b-coloring.

%-----------------
\section{Realizability of sequences and complexity}
%-----------------

Obviously, every graph $G$ realizes at least one sequence: $(1^{\chi(G)})$.
A natural question is whether every sequence can be realized by at least one graph. 
We begin this section by showing that this is indeed the case. 
We introduce two simple operations that preserve realizability. 
These operations are then used in an inductive proof that every finite sequence is realizable, which shows that all sequences can be realized.

\begin{lemma}\label{l:satIter}
For some integer $k\ge1$, let $(s_1,\ldots,s_k)$ be a realizable sequence.
Then the following statements hold.
\begin{enumerate}
    \item $(s_1,\ldots,s_{k-1},s_k,1)$ is realizable. \label{i:,1}
    \item $(s_1,\ldots,s_{k-1},s_k+1)$ is realizable. (Assume $s_k<s_{k-1}$ if $k>1$ to maintain order.) \label{i:+1}
\end{enumerate}
\end{lemma}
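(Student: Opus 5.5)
Both parts follow the same pattern. Start from a graph $G$ and a b-coloring $c$ of $G$ with $k$ colors that realizes $(s_1,\ldots,s_k)$, and build a larger graph $G'$ by adding a small gadget. Then check two things: the extended coloring is proper and realizes the new sequence. In each step we must verify that the old CDVs remain CDVs under the new color set, and that enough new CDVs appear.

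For part~\ref{i:,1}, I would take the join of $G$ with a single new vertex $v$, i.e., make $v$ adjacent to every vertex of $G$, and give $v$ the new color $k+1$. The coloring stays proper, since $v$ is the only vertex of color $k+1$. Every old CDV $x$ already sees colors $1,\ldots,k$ in $N[x]$, and now also sees $k+1$ through $v$. So each color class $i\le k$ still contains at least $s_i$ CDVs. Finally, $v$ is adjacent to all of $G$, which uses every color $1,\ldots,k$, so $v$ is a CDV of color $k+1$. This gives the required single CDV in the last class, and the new graph is connected.

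For part~\ref{i:+1}, I would add one new vertex $w$ that is a CDV of color $k$ without disturbing anything else. Choose an existing CDV $x$ of color $k$ and let $w$ be a \emph{twin} of $x$: set $N(w)=N(x)$ with $w$ not adjacent to $x$, and color $w$ with $k$. Properness holds because every neighbor of $w$ is a neighbor of $x$ and so avoids color $k$. The old CDVs remain CDVs, since the color set is unchanged and no closed neighborhood loses a vertex. The vertex $w$ sees exactly the colors in $N(x)$, which is all colors other than $k$, so $w$ is a new CDV of color $k$. Thus class $k$ now has at least $s_k+1$ CDVs, while the other classes are unchanged. The resulting graph is connected as long as $N(x)\ne\emptyset$. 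In the degenerate case $k=1$, the graph $G$ is $K_1$; there I would instead take $K_2$ colored with two colors, or better handle $k=1$ directly, noting that $(s_1+1)$ with $k=1$ is not realizable by any connected graph with an edge.

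The main obstacle is this edge case $k=1$. I would check whether the intended statement really allows it. A one-color b-coloring forces an edgeless graph, so for connected graphs only $K_1$ qualifies, and then $(2)$ is unrealizable. If needed, I would restrict part~\ref{i:+1} to $k\ge2$, where $x$ has a neighbor and the twin construction works. Apart from that, the only remaining check is that the required order $s_k+1\le s_{k-1}$ holds, and this is exactly the stated hypothesis.
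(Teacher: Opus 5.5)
Your argument is correct and is essentially the paper's. In part~1 the paper joins the new vertex of color $k+1$ only to the CDVs of $G$ rather than to all of $G$; in part~2 it adds a color-$k$ vertex adjacent to one CDV from each other color class instead of your false twin, and both gadgets work for the same reason. Your $k=1$ concern is justified: there the paper's new vertex has no neighbors, so its construction (like yours) yields the edgeless $2K_1$, contradicting its standing connectivity assumption and the base case $(s_1)$ of its realizability theorem. So under that convention part~2 needs $k\ge 2$. Drop the $K_2$ aside, though, since $K_2$ realizes $(1,1)$, not $(2)$.
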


\begin{proof}
Let $G$ be a graph with a coloring realizing $(s_1,\ldots,s_k)$.

To prove \ref{i:,1}, we construct a graph $G'$ from $G$ by adding a new vertex $u$ and joining it to each of the CDVs from each of the $k$ color classes of $G$. Assign $u$ a new color $k+1$. Since no other vertex has color $k+1$, the coloring of $G'$ is proper. Moreover, $u$ is adjacent to a vertex of every color used in $G$, and its closed neighborhood also contains its own color. Hence $u$ is a CDV of color $k+1$. Each previously existing CDV remains a CDV, since it is adjacent to $u$. Thus $G'$ realizes $(s_1,\ldots,s_{k-1},s_k,1)$.

To prove \ref{i:+1}, we construct a graph $G''$ from $G$ by adding a new vertex $v$ and joining it to one CDV from each color class except the $k$th one. Assign color $k$ to vertex $v$. Since $v$ has no neighbor of color $k$, the resulting coloring is proper. Since $N[v]$ contains color $k$ through $v$ itself and every other color through $v$'s neighbors, $v$ is an additional CDV of color $k$. Furthermore, every previously existing CDV remains a CDV, since no new color has been introduced. Therefore $G''$ realizes $(s_1,\ldots,s_{k-1},s_k+1)$.
\end{proof}

\medskip

\begin{theorem}\label{t:finSeqSat}
Every sequence is realizable.
\end{theorem}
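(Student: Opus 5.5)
The plan is to first prove that every finite sequence is realizable, by induction driven by Lemma~\ref{l:satIter}. The infinite case then follows immediately from Definition~\ref{d:sequence}. If $S=(s_1,s_2,\ldots)$ is infinite, it suffices that some finite prefix, say $(s_1)$, is realizable. Indeed, the single vertex $K_1$ realizes $(1)$, and any finite prefix works once the finite case is settled. So the real content is the finite case.

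For a finite sequence $S=(s_1,\ldots,s_k)$, I will build $S$ term by term, starting from the base sequence $(1)$, which is realized by $K_1$. The order of construction is as follows.
\begin{enumerate}
\item Start from $(1)$ and apply part~\ref{i:+1} of Lemma~\ref{l:satIter} $s_1-1$ times to reach $(s_1)$. With $k=1$ there is no ordering constraint.
\item Suppose $(s_1,\ldots,s_j)$ has been realized for some $j<k$. Apply part~\ref{i:,1} to obtain $(s_1,\ldots,s_j,1)$.
\item Apply part~\ref{i:+1} a further $s_{j+1}-1$ times, increasing the last term through the values $2,3,\ldots,s_{j+1}$.
\end{enumerate}
Each intermediate sequence has last term $t$ with $1\le t<s_{j+1}\le s_j$ before an increment. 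The hypothesis $s_k<s_{k-1}$ of part~\ref{i:+1} therefore holds at every step, and every intermediate sequence is non-increasing, hence a legitimate sequence. Formally, this is an induction on $j$, with an inner induction on the value of the last term.

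The only point needing care is this ordering hypothesis in part~\ref{i:+1}, and it is handled by the monotonicity of $S$ as described. I would also note briefly that the proof of part~\ref{i:+1} only requires the coloring to have at least one CDV in each color class, which holds at every stage. So no step is a genuine obstacle. The proof is a routine double induction whose main task is bookkeeping, ensuring that we never pass through a non-monotone sequence.
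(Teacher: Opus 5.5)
Your proposal is correct and is essentially the paper's own proof: base case $K_1$ plus $s_1-1$ applications of part~2, then part~1 followed by $s_{j+1}-1$ applications of part~2, with the infinite case reduced to a finite prefix. Your check that the last term stays strictly below $s_j$ before each increment makes explicit the ordering hypothesis the paper leaves implicit.

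One caveat, shared with the paper: for $k=1$, part~2 adds an isolated vertex. So $(s_1)$ with $s_1\ge 2$ is realized only by an edgeless graph, which clashes with the standing assumption that graphs are connected; indeed no connected graph realizes $(2)$. From length $2$ on every constructed graph is connected, so for infinite sequences it is cleaner to pass to the prefix $(s_1,s_2)$ rather than $(s_1)$.
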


\begin{proof}
Let $S=(s_1,\ldots,s_k)$ be an arbitrary finite sequence.
We proceed by induction on $k$.

Suppose that $k=1$, so $S=(s_1)$. The graph $K_1$ realizes the trivial sequence $(1)$. By applying part~\ref{i:+1} of Lemma~\ref{l:satIter} 
exactly $s_1-1$ times, we find that $(s_1)=S$ is realizable.

Now suppose that $k>1$ and that every sequence of length $k-1$ is realizable. Recall that $S=(s_1,\ldots,s_k)$. By the induction hypothesis, the sequence $(s_1,\ldots,s_{k-1})$ is realizable. Applying part~\ref{i:,1} of Lemma~\ref{l:satIter}, we see that $(s_1,\ldots,s_{k-1},1)$ is realizable. Now, applying part~\ref{i:+1} of Lemma~\ref{l:satIter} exactly $s_k-1$ times 
shows that $(s_1,\ldots,s_{k-1},s_k)=S$ is realizable.

Finally, consider an arbitrary infinite sequence $S'=(s_1,s_2,\ldots)$.
Since we have already shown that the initial subsequence $(s_1)$ is realizable, so is $S'$.
\end{proof}

\medskip

Theorem~\ref{t:finSeqSat} 
completely resolves
the realizability problem.
Consequently, attention shifts from the existence of realizations to their structure. 
Given a fixed sequence, one may ask which graphs realize it. 
Conversely, for a fixed graph, one may ask which sequences it realizes.
Combining the two questions asks whether a given graph realizes a given sequence.

We conclude this section by investigating this third problem from the computational point of view. 
Unsurprisingly, determining $\varphi_S(G)$ for particular $S$ and $G$ is difficult in general. We show that this problem is NP-hard, even when the sequence finishes with 1s. Since we only deal with finite graphs, this assumption is not very restrictive.

Formally, we consider the following decision problem.

\begin{boxedminipage}{0.85\textwidth}
{\sc Name}: SEQUENCE-B-COLORING\\
{\sc Instance:} A sequence $S$ for which there is an integer $q$ such that $s_i=1$ for all $i>q$,\\
\hspace*{.71in}a graph $G$,\\
\hspace*{.71in}and an integer $k$\\
{\sc Question:} Is $\varphi_S(G)\geq k$?\\
\hspace*{.73in}That is, does $G$ realize $(s_1,\ldots,s_r)$ for some $r\geq k$?
\end{boxedminipage}

\begin{theorem}
The problem SEQUENCE-B-COLORING is NP-complete.
\end{theorem}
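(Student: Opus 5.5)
Membership in NP and NP-hardness are handled separately.

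\emph{Membership in NP.} The certificate is an integer $r$ with $k\le r\le \Delta(G)+1$ (by Proposition~\ref{p:ineqs}, no realization can use more colors), together with a proper coloring of $G$ with $r$ colors. Because $s_i=1$ for all $i>q$, the sequence $(s_1,\ldots,s_r)$ is determined by $(s_1,\ldots,s_q)$ and $r$, so it has polynomial size in the input. Verification is polynomial. We check that the coloring is proper and uses all $r$ colors. We compute, for each vertex, whether its closed neighborhood contains all $r$ colors. We count the CDVs in each class. Finally, we check that the counts dominate $(s_1,\ldots,s_r)$ after sorting both in non-increasing order. The classes may be matched to the terms in any order, and sorting gives the optimal matching by a standard exchange argument.

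\emph{NP-hardness.} We reduce from the classical b-coloring decision problem: given $G$ and $k$, is $\varphi(G)\ge k$? Irving and Manlove \cite{Irving} showed this problem is NP-complete. The reduction maps $(G,k)$ to $(S,G,k)$ with $S=(1^\infty)$, taking $q=0$ (or $q=1$ if $q\ge1$ is required). A b-coloring with $r$ colors realizes $(1^r)$ exactly when every class has at least one CDV. So $G$ realizes $(1^r)$ for some $r\ge k$ if and only if $G$ has a b-coloring with at least $k$ colors, that is, if and only if $\varphi(G)\ge k$.

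Here ``$\varphi(G)\ge k$'' means $\varphi(G)$ is at least $k$, not that a b-coloring with exactly $k$ colors exists. The b-spectrum need not be an interval, so the distinction matters, and the problem statement is phrased with ``for some $r\ge k$'' precisely so that it matches $\varphi_S(G)\ge k$.

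\emph{Main obstacle.} The one delicate point is citing a hardness result for exactly this formulation. If Irving and Manlove's result is stated as ``does $G$ have a b-coloring with exactly $k$ colors'', I would instead use the reduction that their proof provides. In that construction, $k$ is the $m$-degree $m(G)$ of the constructed graph, and $\varphi(G)\le m(G)$ always holds. So ``$\varphi(G)\ge k$'' coincides with ``a b-coloring with exactly $k$ colors exists'' on those instances, and the reduction goes through unchanged.
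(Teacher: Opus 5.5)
Your proof is correct, but your hardness argument takes a different and much shorter route than the paper's. You use the fact that $S$ is part of the instance. Restricting to $S=(1^\infty)$ gives $\varphi_S(G)=\varphi(G)$, so SEQUENCE-B-COLORING contains B-CHROMATIC NUMBER as a special case, and hardness follows from \cite[Theorem~8]{Irving} with no construction at all.

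The paper instead fixes an arbitrary sequence $S$ with $s_i=1$ for $i>q$. It builds $H$ by joining $G$ to a complete multipartite graph with independent parts $U_1,\ldots,U_q$ of sizes $s_1,\ldots,s_q$. It then proves that $\varphi(G)\ge k$ if and only if $\varphi_S(H)\ge q+k$. The real work is showing that in any b-coloring of $H$ each $U_i$ is monochromatic, the $q$ colors on the $U_i$ are distinct and absent from $G$, and the restriction to $G$ is a b-coloring.

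The paper's route yields a stronger statement: the problem stays NP-hard for \emph{every fixed} eventually-$1$ sequence, for instance for deciding $\varphi_{(2,1^\infty)}(G)\ge k$. Your specialization to $(1^\infty)$ does not give this. Your route gains brevity. Your NP-membership argument is also more careful than the paper's: you bound $r$, note that $(s_1,\ldots,s_r)$ has polynomial size, and handle the freedom to match color classes to terms by sorting.

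Your ``main obstacle'' paragraph is unnecessary. The paper cites Irving and Manlove for exactly the ``$\varphi(G)\ge k$'' formulation you use. The fallback claim there, that $k=m(G)$ in their construction, is an assertion about their proof that you would have to verify before relying on it.
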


\begin{proof}
It is easy to see that SEQUENCE-B-COLORING belongs to NP. Indeed, given a coloring of $G$, we can check in polynomial time that it is proper, determine all CDVs, and verify that every color class contains at least the required number of CDVs. To prove NP-hardness, we give a polynomial-time reduction from B-CHROMATIC NUMBER, which was shown in \cite[Theorem~8]{Irving} to be NP-complete.

Let $G$ be a graph and let $k$ be a positive integer. Since $S$ ends with 1s, there is an integer $q$ such that $s_i=1$ for every $i>q$. We construct a graph $H$ from $G$ as follows. For every $i\in\{1,\ldots,q\}$, add an independent set $U_i$ of $s_i$ new vertices. Join every vertex in $U_i$ to every vertex in $U_j$ whenever $i\neq j$. Finally, join every new vertex to every vertex of $G$. The graph $H$ can be constructed in polynomial time. We prove that $\varphi(G)\geq k$ if and only if $\varphi_S(H)\geq q+k$.

Assume first that $\varphi(G)\geq k$. Then $G$ has a b-coloring using $r$ colors for some $r\geq k$. Rename its colors as $q+1,\ldots,q+r$. Extend this coloring to $H$ by assigning color $i$ to every vertex of $U_i$, for every $1\leq i\leq q$. The obtained coloring is proper. Each set $U_i$ is independent, vertices from different sets $U_i$ and $U_j$ are adjacent, and every new vertex is adjacent to every vertex of $G$. Let $u\in U_i$. The closed neighborhood of $u$ contains color $i$ through $u$ itself, every color $j\in\{1,\ldots,q\}\setminus\{i\}$ through the vertices of $U_j$, and every color in $\{q+1,\ldots,q+r\}$ through the vertices of $G$. Since every vertex of $U_i$ has the same neighborhood, every vertex of $U_i$ is a CDV. Therefore color $i$ contains exactly $s_i$ CDVs for every $1\leq i\leq q$. 
Each CDV of the original coloring of $G$ remains a CDV in $H$, since it is adjacent to all of the new vertices and therefore its closed neighborhood contains all colors $1,\ldots,q$.
Thus the extended coloring is a b-coloring realizing $(s_1,\ldots,s_{q+r})$. Hence $\varphi_S(H)\geq q+r\geq q+k$.

Now assume that $\varphi_S(H)\geq q+k$. Then $H$ has a b-coloring realizing $(s_1,\ldots,s_r)$ for some $r\geq q+k$. First, consider the colors of the new vertices. Since every vertex of $U_i$ is adjacent to every vertex outside $U_i$, a color used in $U_i$ cannot appear outside $U_i$. We also show that every set $U_i$ is monochromatic. Suppose that two different colors appear in $U_i$. Since $U_i$ is independent, no vertex of one of these colors is adjacent to a vertex of the other color. Therefore no vertex of either color in $U_i$ can be a CDV. This is impossible, because every color used in the coloring must contain at least one CDV. Thus every set $U_i$ receives exactly one color. Since vertices from different sets $U_i$ and $U_j$ are adjacent, these $q$ colors are pairwise different. Also, none of these colors appears in $G$. Therefore exactly $q$ colors are used on the new vertices, while the remaining $r-q$ colors are used on $G$. Now consider the restriction of the coloring to $G$. It is a proper coloring of $G$ with $r-q$ colors. Let $c$ be one of these colors. Since $c$ does not appear on the new vertices and the coloring of $H$ is a b-coloring, color $c$ has a CDV in $G$. This vertex remains a CDV after restricting the coloring to $G$, because its closed neighborhood still contains every color used on $G$. Thus every color used on $G$ still contains a CDV. Therefore the restriction is a b-coloring of $G$ using $r-q$ colors, and hence $\varphi(G)\geq r-q\geq k$.

We have proved that $\varphi(G)\geq k$ if and only if $\varphi_S(H)\geq q+k$. Therefore SEQUENCE-B-COLORING is NP-hard, and consequently it is NP-complete.
\end{proof}

%-----------------
\section{Cycles}\label{s:cycles}
%-----------------

Cycles form the simplest nontrivial class of connected regular graphs. Although their structure is elementary, determining all sequences that they realize is already a rather involved problem. Their study therefore provides a useful step toward the more general results on regular graphs in the next section.

Let us recall some relevant facts.
For $n$ even, $\chi(C_n)=2$.
Moreover, the cycle on four vertices has $\varphi(C_4)=2$, while each even cycle on at least six vertices has $\varphi(C_n)=3$.
Odd cycles have $\chi(C_n)=\varphi(C_n)=3$.

We observe that sequences with two terms cannot be realized by odd cycles.
Also, the only way to properly color the even cycle $C_{2r}$ with two colors is to alternate those colors, yielding exactly $r$ CDVs in each color class.
Hence, $C_{2r}$ realizes the sequence $(r,r)$ but does not realize $(r+1,1)$. 

We now turn to sequence b-colorings with three colors.

\begin{lemma}\label{l:ab,2a+b}
If the cycle $C_n$ admits a b-coloring with three colors, with at least $r$ vertices of color $1$ and at least $s$ CDVs of color $2$, then $n\ge 2r+s$.
\end{lemma}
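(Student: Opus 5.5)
We argue by a counting/injection argument on the cyclic order of the vertices. In a b-coloring of $C_n$ with three colors, a vertex $x$ of color $2$ is a CDV exactly when its two neighbours on the cycle receive colors $1$ and $3$ (one each, in some order). The plan is to exhibit $2r+s$ distinct vertices of $C_n$, namely the $r$ vertices of color $1$, the $s$ CDVs of color $2$, and $r$ further vertices that are neither of color $1$ nor CDVs of color $2$.

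First we fix an orientation of the cycle and define a map $f$ sending each vertex $x$ of color $1$ to its successor $x^+$. Since the coloring is proper, $x^+$ has color $2$ or $3$, and $f$ is injective. We then modify $f$ so that its image avoids the CDVs of color $2$.

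\begin{itemize}
\item If $x^+$ has color $3$, or has color $2$ but is not a CDV, we keep $f(x)=x^+$.
\item If $x^+$ is a CDV of color $2$, then its other neighbour $x^{++}$ has color $3$ (it cannot be $1$, since the CDV needs both $1$ and $3$ among its two neighbours). In this case we set $f(x)=x^{++}$.
\end{itemize}

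In all cases $f(x)$ is a vertex that is neither of color $1$ nor a CDV of color $2$.

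The main obstacle is injectivity of the modified map. A vertex $y$ of color $3$ could be hit twice only as $y=x^+$ for one $x$ and $y=x'^{++}$ for another $x'$. But $y=x^+$ forces $y^-=x$ to have color $1$, while $y=x'^{++}$ forces $y^-=x'^+$ to be a CDV of color $2$. These are the same vertex $y^-$, which cannot have two colors, so this is a contradiction. Vertices of color $2$ that are not CDVs arise only as $x^+$, so they are hit at most once. Hence $f$ is injective.

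Consequently the vertices of color $1$, the CDVs of color $2$, and the image of $f$ are three pairwise disjoint sets of sizes at least $r$, $s$ and $r$. Therefore $n\ge 2r+s$.

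Finally, we check the degenerate cases, where $n$ is small and $x^{++}$ might coincide with $x$ or $x^-$. Since $x^{++}$ has color $3$ while $x$ has color $1$, we have $x^{++}\neq x$. Any other coincidence is already handled by the color bookkeeping above, because every part of the argument (the three sets being disjoint, and the injectivity check) used only colors and CDV status, never the positions of the vertices. So no separate small case analysis is needed.
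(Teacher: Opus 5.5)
Your proof is correct, and it packages the count differently from the paper.

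The paper decomposes the cycle into the $r'\ge r$ paths between consecutive vertices of color $1$. It then analyzes the full coloring of each path's internal vertices: a single internal vertex is never a CDV, and a longer run alternates $2,3$ with exactly its two end vertices being CDVs. From this it concludes that a run of $m$ internal vertices contains at most $m-1$ CDVs of color $2$, and summing gives $s\le n-2r'$.

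Your map $f$ is a charging version of the same count. $f(x)$ is always the first or second vertex of the run of non-color-$1$ vertices immediately following $x$, because $x^{++}$ has color $3$ and so still lies in that run. So what you really exhibit is one vertex per run that is not a CDV of color $2$. Seen this way, your injectivity check reduces to the observation that distinct color-$1$ vertices start disjoint runs. Your color bookkeeping is also fine, though it passes silently over the trivial coincidences $x^+=x'^+$ and $x^{++}=x'^{++}$.

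What your version buys is economy: it looks only two steps past each color-$1$ vertex and needs no analysis of a run's interior or length, and it handles $r=0$ and small $n$ uniformly. What the paper's version buys is a complete description of each run, which is reused at once in the parity argument of Lemma~\ref{l:n-a,even}.

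Both arguments actually prove $n\ge 2|X|+s'$, where $X$ is the whole color class $1$ and $s'$ is the total number of CDVs of color $2$. This is the form used in Corollary~\ref{c:n-aodd,2a+b+2}.
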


\begin{proof}
Let $X$ be the set of vertices colored $1$, and write $|X|=r'\ge r$. The vertices of $X$ determine $r'$ internally vertex-disjoint paths whose endpoints are consecutive vertices of $X$ along the cycle. Since the coloring is proper, each of these paths contains at least one internal vertex.

If such a path contains exactly one internal vertex, then this vertex is not a CDV. If it contains at least two internal vertices, then propriety requires that their colors alternate between $2$ and $3$, and exactly the ones on the ends are CDVs. Consequently, the path contains at most two CDVs of color 2.
To contain a single CDV of color 2, the path must have at least one extra (beyond the required one) internal vertex.
To contain two CDVs of color 2, the path must have at least two extra internal vertices.

For the full cycle to contain at least $s$ CDVs of color 2, then, the $r'$ paths together need to have at least $s$ extra internal vertices.
The graph thus has at least the $r'$ vertices in $X$, the $r'$ required internal vertices and at least $s$ extra internal vertices.
Now $n\geq r'+r'+s\geq 2r+s$, as indicated.
\end{proof}

\medskip

\begin{corollary}\label{c:abc,2a+b}
If the cycle $C_n$ realizes the sequence $(r,s,t)$, then $n\geq 2r+s$.
\end{corollary}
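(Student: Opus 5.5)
This follows directly from Lemma~\ref{l:ab,2a+b}. Suppose $C_n$ realizes $(r,s,t)$ and fix a realizing b-coloring. By Definition~\ref{d:sequence}, it uses exactly three colors. Color class $1$ contains at least $r$ CDVs, and color class $2$ contains at least $s$ CDVs.

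Every CDV of color $1$ is, in particular, a vertex of color $1$. So there are at least $r$ vertices of color $1$. The hypotheses of Lemma~\ref{l:ab,2a+b} therefore hold with these values of $r$ and $s$, and the lemma gives $n\ge 2r+s$.

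No step here is a real obstacle. The only points to check are that a realization of $(r,s,t)$ is a b-coloring with exactly three colors, which holds by definition, and that the colors are indexed so that the $i$th term of the sequence belongs to color $i$. The weaker hypothesis of the lemma, which counts only vertices of color $1$ rather than CDVs, is implied by the stronger condition on CDVs. The term $t$ plays no role in the bound.
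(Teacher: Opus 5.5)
Your proposal is correct and is essentially the paper's own argument. A realizing coloring is a three-color b-coloring with at least $r$ vertices of one color (its $r$ CDVs) and at least $s$ CDVs of another color, so Lemma~\ref{l:ab,2a+b} gives $n\ge 2r+s$ directly.
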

\begin{proof}
A coloring realizing $(r,s,t)$ is a b-coloring with three colors containing at least $r$ vertices of one color and at least $s$ CDVs of another color. The result therefore follows directly from Lemma~\ref{l:ab,2a+b}.
\end{proof}

\medskip

Notice that the parameter $t$ does not appear in the bound. This is because CDVs of colors $2$ and $3$ may lie on the same path between two consecutive vertices of color $1$. The next lemma shows that the parity of these paths yields a stronger necessary condition in certain cases.

\begin{lemma}\label{l:n-a,even}
If the cycle $C_n$ admits a b-coloring with three colors in which exactly $r$ vertices are colored $1$, and all of them are CDVs, then $n-r$ is even.
\end{lemma}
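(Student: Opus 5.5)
We use the same decomposition as in the proof of Lemma~\ref{l:ab,2a+b}. Let $X$ be the set of the $r$ vertices of color $1$. Cutting the cycle at these vertices gives $r$ internally vertex-disjoint paths whose endpoints are consecutive vertices of $X$. Since the coloring is proper, every path has at least one internal vertex, and all internal vertices are colored $2$ or $3$. The internal vertices of a single path induce a subpath colored with only two colors, so the colors $2$ and $3$ must alternate along it. The vertices of color $1$ account for exactly $r$ of the $n$ vertices. It therefore suffices to show that every path has an even number of internal vertices, because then $n-r$ is a sum of even numbers.

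The key step is to exploit the hypothesis that every vertex of $X$ is a CDV. Let $x\in X$. Its neighbors are the two vertices adjacent to it on the cycle, namely the end internal vertex of the path on one side of $x$ and the end internal vertex of the path on the other side. For $x$ to be a CDV, these two neighbors must have different colors, one colored $2$ and the other colored $3$.

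Now orient the cycle. Suppose some path, traversed in this direction, has internal vertices starting with color $2$. Then the path has an even number of internal vertices exactly when it ends with color $3$, because the colors alternate. Let $x$ be the vertex of $X$ at the end of this path. Since $x$ is a CDV, the next path must begin with the color that differs from the last color of the current path. I will argue by a propagation (or counting) argument around the cycle that every path must start and end with different colors.

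The main obstacle is making this propagation rigorous. The CDV condition only says that the last color of one path differs from the first color of the next path. A single path of odd length (first color equal to last color) is therefore not immediately contradictory by itself, so a global argument is needed. I will use a parity count. Call a path odd if it has an odd number of internal vertices, equivalently if its first and last colors are equal. Traverse the cycle and track the current color among $\{2,3\}$. Along a path, the color switches from the first internal vertex to the last exactly when the path is even. Across each vertex of $X$, the CDV condition forces a switch. Going once around the cycle returns to the starting color, so the total number of switches is even. There are $r$ forced switches across the vertices of $X$, plus one switch for each even path. This gives $r+(\text{number of even paths})\equiv 0 \pmod 2$. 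Hence the number of odd paths, which is $r-(\text{number of even paths})$, is also even.

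Since the number of internal vertices is $n-r$, and odd paths contribute odd counts, we get $n-r\equiv(\text{number of odd paths})\equiv 0\pmod 2$. This completes the argument. Note that this avoids needing every single path to be even.
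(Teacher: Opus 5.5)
Your parity-of-switches argument is correct and is essentially the paper's proof. The paper merges each path with the following color-$1$ vertex into one step, so the color of the first internal vertex flips exactly when the path is odd; you instead count the $r$ forced flips at the vertices of $X$ and the flips along even paths separately. Both reach the same conclusion: the number of odd paths is even, hence $n-r$ is even.

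You should delete the earlier plan to show that every path starts and ends with different colors. That claim is false in general: the $8$-cycle colored $1,2,3,2,1,3,2,3$ satisfies the hypotheses and has two odd paths. Your final count rightly does not rely on it.
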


\begin{proof}
Let $X$ be the set of vertices colored $1$. As in the proof of Lemma~\ref{l:ab,2a+b}, the vertices of $X$ determine $r$ internally vertex-disjoint paths whose endpoints are consecutive vertices of $X$ along the cycle. Since the coloring is proper, the internal vertices of each such path alternate between colors $2$ and $3$, which we consider opposites.

Since every vertex of $X$ is a CDV, its two neighbors, which are the last internal vertex of one path and the first internal vertex of the next, have opposite colors.
Consider one of these paths, and let $m$ be its number of internal vertices. If $m$ is odd, then the first and last internal vertices of this path have the same color. If $m$ is even, then they have opposite colors.
Therefore, the first internal vertex of this path and the first internal vertex of the next path have opposite colors exactly when $m$ is odd.

Starting at the initial internal vertex of some path and traversing the entire cycle, we return to this starting vertex with its given color. Hence, the number of paths containing an odd number of internal vertices must be even.
This number has the same parity as the total number of internal vertices over all the paths.
This total is $n-r$, so $n-r$ is even.
\end{proof}

\medskip

\begin{corollary}\label{c:n-aodd,2a+b+2}
If the cycle $C_n$ realizes the sequence $(r,s,t)$ and $n-r$ is odd, then $n\ge 2r+s+2$.
\end{corollary}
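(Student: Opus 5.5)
The plan is to split on the number of vertices of color $1$. Fix a coloring of $C_n$ with three colors realizing $(r,s,t)$. Then color $1$ has at least $r$ CDVs, hence at least $r$ vertices, and color $2$ has at least $s$ CDVs. Let $r'$ be the exact number of vertices of color $1$, so that $r'\ge r$.

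\emph{Case $r'=r$.} Color $1$ has exactly $r$ vertices and at least $r$ CDVs, so every vertex of color $1$ is a CDV. The hypotheses of Lemma~\ref{l:n-a,even} then hold with exactly $r$ vertices colored $1$. That lemma forces $n-r$ to be even, which contradicts the assumption that $n-r$ is odd. So this case cannot occur.

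\emph{Case $r'\ge r+1$.} Apply Lemma~\ref{l:ab,2a+b} with $r+1$ in place of $r$. The coloring is a b-coloring with three colors, it has at least $r+1$ vertices of color $1$, and it has at least $s$ CDVs of color $2$. The lemma therefore gives
\[
n\ge 2(r+1)+s=2r+s+2,
\]
which is the claimed bound.

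No step here is a real obstacle. The only point needing care is to apply Lemma~\ref{l:ab,2a+b} with the actual count $r'$ of color-$1$ vertices rather than $r$. Using $r$ directly would only recover Corollary~\ref{c:abc,2a+b}, which loses the extra $2$.
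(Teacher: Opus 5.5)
Your proof is correct and is essentially the paper's argument. The paper makes the same split on the true count $r'$ of color-$1$ vertices: if $r'\ge r+1$ it applies Lemma~\ref{l:ab,2a+b}, and if $r'=r$ it applies Lemma~\ref{l:n-a,even}. The only difference is that the paper phrases this as a proof by contradiction (assuming $n<2r+s+2$), whereas you give a direct case analysis.
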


\begin{proof}
Suppose, to the contrary, that $C_n$ realizes $(r,s,t)$ and $n-r$ is odd but
$n<2r+s+2$. Let color $1$ be the color whose class contains at least $r$ CDVs, and let $r'\geq r$ be the total number of vertices colored $1$. Similarly, let color $2$ be a color whose class contains at least $s$ CDVs. Applying Lemma~\ref{l:ab,2a+b} to colors $1$ and $2$ gives $n\ge 2r'+s$. If $r'\ge r+1$, then $n\ge 2(r+1)+s=2r+s+2$, contrary to our assumption. Therefore, $r'=r$. Since the color class contains at least $r$ CDVs and exactly $r$ vertices, every vertex colored $1$ is a CDV. Lemma~\ref{l:n-a,even} now implies that $n-r$ is even, contrary to the assumption that $n-r$ is odd. Therefore, $n\ge 2r+s+2$, as required.
\end{proof}

\medskip

We now turn to the corresponding constructions. The following two lemmas show that the bounds obtained above are tight in the cases relevant for the complete characterization.

\begin{lemma}\label{l:4r+3s,2r+sss}
If $n=4r+3s$ for integers $r\ge 0$ and $s\ge 1$, then the cycle $C_n$ realizes $(2r+s,s,s)$.
\end{lemma}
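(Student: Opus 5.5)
We give an explicit coloring of $C_n$, $n=4r+3s$, built from two kinds of blocks read cyclically. The first block is $B=(1,2,1,3)$ of length $4$, used $r$ times. The second is $A=(1,2,3)$ of length $3$, used $s$ times. We concatenate them as $A^{s}B^{r}$ around the cycle, giving $3s+4r=n$ vertices.

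In the cyclic word every block starts with $1$ and ends with $2$ or $3$, so the $1$s are never adjacent across a block boundary. Inside a block the coloring is visibly proper. Thus the coloring is proper, and all three colors are used because $s\ge 1$.

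Next we count CDVs, using that a vertex is a CDV exactly when its two neighbors carry the two other colors.

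\emph{Color $1$.} Each $1$ sits between the last vertex of the previous block (color $2$ or $3$) and the next vertex of its own block.
\begin{itemize}
\item The leading $1$ of an $A$ block has right neighbor $2$, so it is a CDV iff the previous block ends in $3$. Both $A$ and $B$ end in $3$, so it always is.
\item The leading $1$ of a $B$ block likewise has right neighbor $2$ and left neighbor $3$, so it is a CDV.
\item The middle $1$ of a $B$ block has neighbors $2$ and $3$, so it is a CDV.
\end{itemize}
This gives $s+2r$ CDVs of color $1$.

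\emph{Colors $2$ and $3$.} In each $A$ block, the $2$ has neighbors $1,3$ and the $3$ has neighbors $2$ and the next block's leading $1$, so both are CDVs. This gives $s$ CDVs of each of colors $2$ and $3$. The $B$ blocks contribute none, which is fine.

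The coloring therefore realizes $(2r+s,s,s)$; since $s\le 2r+s$, the sequence is non-increasing. It is a b-coloring because every class contains a CDV, using $s\ge1$.

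The degenerate cases need a quick check. When $r=0$ the coloring is just $(1,2,3)^s$ on $C_{3s}$. When $s=1$ and $r=0$ we get $C_3$, where all three vertices are CDVs, as required. There is no real obstacle here; the only care needed is the boundary checks at block junctions, which are done above.
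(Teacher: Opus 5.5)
Your proposal is correct and uses the same construction as the paper: $r$ copies of $1,2,1,3$ and $s$ copies of $1,2,3$ placed consecutively around the cycle. Your arrangement $A^{s}B^{r}$ is just a rotation of the paper's $B^{r}A^{s}$. Your junction checks (every block ends in $3$ and begins with $1,2$) spell out the boundary verification that the paper only asserts.
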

\begin{proof}
Going around the cycle, first color $4r$ consecutive vertices by repeating the pattern $1,2,1,3$ exactly $r$ times, and then color the remaining $3s$ consecutive vertices by repeating the pattern $1,2,3$
exactly $s$ times. This defines a proper coloring of $C_n$.

Each occurrence of the pattern $1,2,1,3$ contributes two CDVs of color $1$. Similarly, each occurrence of the pattern $1,2,3$ contributes one CDV of each color. The same remains true at the junctions between consecutive occurrences of the patterns and between the last and first vertices of the cycle. Therefore, the resulting coloring contains exactly $2r+s$ CDVs of color $1$, and exactly $s$ CDVs of each of colors $2$ and $3$. Hence it realizes the sequence $(2r+s,s,s)$.
\end{proof}

\medskip

\begin{lemma}\label{l:3k+2,kkk}
If $n=3s+2$ for some integer $s\geq1$, then the cycle $C_n$ realizes the sequence $(s,s,s)$.
\end{lemma}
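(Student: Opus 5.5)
The plan is a direct explicit construction in the style of Lemma~\ref{l:4r+3s,2r+sss}. Going around the cycle $C_{3s+2}$, we color the first $3s$ consecutive vertices by repeating the pattern $1,2,3$ exactly $s$ times. We then color the remaining two vertices $1,2$. So the cyclic color word is
$$(1,2,3)^s,1,2,$$
where the last vertex, colored $2$, is adjacent to the first vertex, colored $1$. This coloring is proper, because consecutive entries of the word always differ, including across the junction between the end and the start.

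Next we count CDVs. On a cycle, a vertex is a CDV exactly when its two neighbors carry the two colors different from its own.
\begin{itemize}
\item Every vertex of color $3$ sits inside a block $1,2,3$ and is followed by a $1$. Its neighbors are therefore $2$ and $1$, so all $s$ vertices of color $3$ are CDVs.
\item Every vertex of color $2$ inside the $s$ blocks has neighbors $1$ and $3$, so it is a CDV. This gives $s$ CDVs of color $2$. The appended vertex of color $2$ has both neighbors colored $1$, so it is not a CDV; this loss does not matter.
\item Every vertex of color $1$ that starts a block other than the first has neighbors $3$ and $2$, so it is a CDV. The appended vertex of color $1$ also has neighbors $3$ and $2$, so it is a CDV. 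Only the very first vertex, whose neighbors are both colored $2$, fails to be a CDV. This again leaves $s$ CDVs of color $1$.
\end{itemize}

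Since $s\ge1$, every color class contains a CDV, so the coloring is a b-coloring with three colors. It realizes $(s,s,s)$.

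The only delicate point is the bookkeeping at the junction, where the two appended vertices and the first block interact. I would double-check it on the base case $s=1$, which gives $C_5$ colored $1,2,3,1,2$ with CDVs at positions $2,3,4$. I would also note, as a consistency check, that $n-s=2s+2$ is even, in agreement with Lemma~\ref{l:n-a,even}.
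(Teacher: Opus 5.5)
Your construction is correct and is essentially the paper's: both start from $(1,2,3)^s$ and append two vertices. The paper colors the appended vertices $1,3$, so those two are the non-CDVs; your choice $1,2$ makes the first vertex and the appended $2$ the non-CDVs instead, which is the same coloring up to rotating the cycle and permuting the colors.
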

\begin{proof}
Going around the cycle, color the first $3s$ consecutive vertices by repeating the pattern $1,2,3$ exactly $s$ times, and color the remaining two vertices with colors $1$ and $3$, respectively. This defines a proper coloring of $C_n$.

Every vertex among the first $3s$ vertices has neighbors of the other two colors and is therefore a CDV. The remaining vertex of color $1$ has two neighbors of color $3$, while the remaining vertex of color $3$ has two neighbors of color $1$, so neither of them is a CDV. Consequently, each color has exactly $s$ CDVs. Hence the coloring realizes the sequence $(s,s,s)$.
\end{proof}

\medskip

The results of this section so far provide both necessary conditions and explicit constructions for three-term sequences realized by cycles. Since the constructions are based on patterns of lengths $3$ and $4$, it is natural to distinguish residue classes modulo $12$. Combining the necessary conditions with these constructions yields the following description of the three-term sequences realized by cycles.

\begin{proposition}\label{p:mod12}
Regarding three-term sequences, the cycle $C_3$ only realizes $(1,1,1)$ and $C_4$ does not realize any such sequence.
Suppose $n\geq5$ and write $n=12k+j$ for $k\geq0$ and $0\leq j<12$.
The first table below lists maximal three-term sequences realized by the cycle $C_n$,
while the second table lists minimal three-term sequences not realized by $C_n$.

\bigskip

\begin{tabular}{r|lcl}
\hline
For $j=$&\multicolumn{3}{c}{the cycle $C_{12k+j}$ realizes}\\
\hline
0&$(6k-2i+0,4i+0,4i+0)$&for $1\leq i\leq k$&\\
1&$(6k-2i+1,4i-1,4i-1)$&for $1\leq i\leq k$&\\
2&$(6k-2i+2,4i-2,4i-2)$&for $1\leq i\leq k$&and $(4k,4k,4k)$\\
3&$(6k-2i+1,4i+1,4i+1)$&for $0\leq i\leq k$&\\
4&$(6k-2i+2,4i+0,4i+0)$&for $1\leq i\leq k$&\\
5&$(6k-2i+3,4i-1,4i-1)$&for $1\leq i\leq k$&and $(4k+1,4k+1,4k+1)$\\
6&$(6k-2i+2,4i+2,4i+2)$&for $0\leq i\leq k$&\\
7&$(6k-2i+3,4i+1,4i+1)$&for $0\leq i\leq k$&\\
8&$(6k-2i+4,4i+0,4i+0)$&for $1\leq i\leq k$&and $(4k+2,4k+2,4k+2)$\\
9&$(6k-2i+3,4i+3,4i+3)$&for $0\leq i\leq k$&\\
10&$(6k-2i+4,4i+2,4i+2)$&for $0\leq i\leq k$&\\
11&$(6k-2i+5,4i+1,4i+1)$&for $0\leq i\leq k$&and $(4k+3,4k+3,4k+3)$\\
\hline
\end{tabular}

\bigskip

\begin{tabular}{r|ll}
\hline
For $j=$&\multicolumn{2}{c}{the cycle $C_{12k+j}$ does not realize}\\
\hline
0&$(6k-2i+1,\max\{4i-3,1\},1)$&for $1\leq i\leq k$\\
1&$(6k-2i+0,\max\{4i+0,1\},1)$&for $0\leq i\leq k$\\
2&$(6k-2i+1,\max\{4i-1,1\},1)$&for $0\leq i\leq k$\\
3&$(6k-2i+2,\max\{4i-2,1\},1)$&for $0\leq i\leq k$\\
4&$(6k-2i+1,\max\{4i+1,1\},1)$&for $0\leq i\leq k$\\
5&$(6k-2i+2,\max\{4i+0,1\},1)$&for $0\leq i\leq k$\\
6&$(6k-2i+3,\max\{4i-1,1\},1)$ &for $0\leq i\leq k$\\
7&$(6k-2i+2,\max\{4i+2,1\},1)$&for $-1\leq i\leq k$\\
8&$(6k-2i+3,\max\{4i+1,1\},1)$&for $0\leq i\leq k$\\
9&$(6k-2i+4,\max\{4i+0,1\},1)$&for $0\leq i\leq k$\\
10&$(6k-2i+3,\max\{4i+3,1\},1)$&for $-1\leq i\leq k$\\
11&$(6k-2i+4,\max\{4i+2,1\},1)$&for $-1\leq i\leq k$\\
\hline
\end{tabular}

\bigskip

\noindent Together, the two tables determine all
three-term sequences realized by $C_n$.
\end{proposition}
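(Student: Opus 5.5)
The plan is to reduce the whole statement to two facts: a necessary condition for realizing $(a,b,c)$ and a matching sufficient condition. Since realizability is monotone (if $C_n$ realizes $T$ and $S\le T$, it realizes $S$), it suffices to describe the set of realizable triples. This set is a down-set, and the tables list its maximal elements and the minimal non-elements. I would prove:
\begin{quote}
for $n\ge 5$, $C_n$ realizes $(a,b,c)$ with $a\ge b\ge c\ge 1$ if and only if $n\ge 2a+b$, with the additional requirement $n\ge 2a+b+2$ when $n-a$ is odd.
\end{quote}
Necessity is exactly Corollary~\ref{c:abc,2a+b} together with Corollary~\ref{c:n-aodd,2a+b+2}. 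The small cases are handled directly. For $C_3$, every proper $3$-coloring has singleton classes, so only $(1,1,1)$ is realized. For $C_4$, $\varphi(C_4)=2$, so no three-color b-coloring exists.

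For sufficiency, it is enough to realize, for each admissible $a$, the triple $(a,b,b)$ with $b$ as large as the condition allows. This means $b=n-2a$ if $n-a$ is even and $b=n-2a-2$ if $n-a$ is odd, subject to $b\le a$ and $b\ge1$; when $b$ would exceed $a$ we cap at $(a,a,a)$. I would build these colorings from Lemma~\ref{l:4r+3s,2r+sss}, generalized in the obvious way to a mixture of blocks. Use $p$ blocks $1,2,1,3$ and $q$ blocks $1,2,3$ around the cycle, so $n=4p+3q$. This yields the triple $(2p+q,q,q)$, and $n=2a+b$ holds with $a=2p+q$, $b=q$.

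The parity condition is exactly what makes $n-a=2p+2q$ even, so the non-excess case is covered. In the odd case, or when $n\not\equiv$ a suitable value, I would insert a short non-CDV gadget. Lemma~\ref{l:3k+2,kkk} shows that appending the two vertices $1,3$ after a run of $1,2,3$ blocks costs two vertices and destroys no required CDVs. Appending $1,3$ to the mixed pattern should likewise give $(2p+q,q,q)$ on $n=4p+3q+2$, which exactly matches the $+2$ case. Setting $a=2p+q$, $b=q$ and letting $q$ range over values with the right parity then yields all maximal triples.

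The remaining work is bookkeeping. Writing $n=12k+j$ and solving $n=2a+b$ (or $n=2a+b+2$) with $b\le a$, $b\equiv a$ or the appropriate parity, should produce precisely the rows $(6k-2i+\cdot,\,4i+\cdot,\,4i+\cdot)$ of the first table. The step $a\mapsto a-2$, $b\mapsto b+4$ corresponds to trading two $1,2,1,3$ blocks for ... wait, more precisely to replacing one $1,2,1,3$ block and moving vertices into $1,2,3$ blocks. The extra $(m,m,m)$ entries come from the capped case $b=a$ via Lemma~\ref{l:3k+2,kkk}. The second table is then verified by checking that each listed triple violates the criterion, while decreasing any coordinate gives a triple dominated by a first-table entry. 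I expect the main obstacle to be this residue-class verification, in particular the boundary indices ($i=0$, $i=-1$, the $\max\{\cdot,1\}$ truncation, and small $k$ such as $n=5,6,7$). There the block decomposition may need $p$ or $q$ to be zero or negative, and a direct check or small ad hoc coloring will be needed.
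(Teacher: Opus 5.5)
Your overall plan matches the paper's proof. Necessity comes from Corollaries~\ref{c:abc,2a+b} and~\ref{c:n-aodd,2a+b+2}, the constructions come from Lemmas~\ref{l:4r+3s,2r+sss} and~\ref{l:3k+2,kkk}, and monotonicity passes between maximal realized and minimal non-realized triples. Your explicit criterion is correct: $n\ge 2a+b$, strengthened to $n\ge 2a+b+2$ when $n-a$ is odd. It is also a cleaner way to organize the ``no gaps'' check that the paper leaves to the reader.

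\textbf{The gap.} Your sufficiency argument fails at one concrete point. Appending $1,3$ to $(1,2,1,3)^p(1,2,3)^q$ gives $n=4p+3q+2$ with $a=2p+q$. So $n-a=2p+2q+2$ is \emph{even}, and the gadget never produces a triple in the odd case, contrary to your claim that it ``exactly matches the $+2$ case''. For $p\ge 1$, the triple it yields is already dominated by $(2p+q,q+2,q+2)$ from Lemma~\ref{l:4r+3s,2r+sss}. Similarly, Lemma~\ref{l:3k+2,kkk} handles the capped triple $(a,a,a)$ only when $n=3a+2$, not for all $n>3a$. For $n=3m+1$, neither lemma produces any diagonal triple.

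\textbf{The repair.} No new construction is needed, only monotonicity in the first coordinate. If $n-a$ is odd and $n\ge 2a+b+2$, then $(a+1,b,b)$ satisfies $n-(a+1)$ even and $n\ge 2(a+1)+b$, and it dominates $(a,b,b)$. So you only have to realize the maximal elements of your criterion set. These are:
\begin{itemize}
\item $(a,n-2a,n-2a)$ with $a\equiv n \pmod 2$ and $n/3\le a<n/2$, given by Lemma~\ref{l:4r+3s,2r+sss} with $r=(3a-n)/2$ and $s=n-2a$;
\item $(m,m,m)$ when $n=3m+2$, given by Lemma~\ref{l:3k+2,kkk}.
\end{itemize}
Every capped triple lies below one of these; for instance, for $n=3m+1$ it lies below $(m+1,m-1,m-1)$.

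These maximal elements are exactly the rows of the first table. In particular, no maximal realized triple has $n-a$ odd. That is why the paper needs only the two lemmas, and why your bookkeeping should solve $n=2a+b$ (plus the diagonal case $n=3m+2$), not $n=2a+b+2$.

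A smaller correction: the period-$12$ step $(a,b)\mapsto(a-2,b+4)$ trades three $1,2,1,3$ blocks for four $1,2,3$ blocks (twelve vertices each way), not one block.
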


\begin{proof}
The statements about $C_3$ and $C_4$ are clear.

Most of the realization table follows from Lemma~\ref{l:4r+3s,2r+sss}.
For each row, choose integers $r\ge 0$ and $s\ge 1$ such that $12k+j=4r+3s$.
Then Lemma~\ref{l:4r+3s,2r+sss} yields the sequence $(2r+s,s,s)$, which gives the corresponding entry in the first table. 
The $i$ intervals keep $2r+s\geq s\geq1$.
For $j\in\{2,5,8,11\}$, the additional sequence follows from Lemma~\ref{l:3k+2,kkk}, since in these cases $12k+j\equiv2\pmod 3$.

The non-realization table follows from Corollary~\ref{c:n-aodd,2a+b+2}.
For every sequence $(a,b,1)$ that is listed, we have $(12k+j)-a\equiv1\pmod 2$ and $12k+j<2a+b+2$. Hence $C_{12k+j}$ does not realize $(a,b,1)$.
The $i$ intervals keep $a\geq b$ without redundant (less than already listed) sequences.

Finally, 
Definition~\ref{d:sequence}
implies that if $S\leq T$, then a graph realizing $T$ also realizes $S$. Consequently, every sequence less than a sequence listed in the first table is also realized, while no sequence greater than a sequence listed in the second table is realized. 
One can verify that these leave no gaps.
Therefore, the sequences in the first table are maximal, those in the second are minimal, and the two tables together determine all
three-term sequences realized by $C_n$.
\end{proof}

The following two corollaries highlight two particularly interesting families of sequences. Their proofs amount to reading the corresponding extremal values from Proposition~\ref{p:mod12}.

\begin{corollary}
Suppose $n\geq5$ and write $n-2=4a+2b+c$, where $a$ is a nonnegative integer and $b,c\in\{0,1\}$. Then the maximum integer $t$ for which the cycle $C_n$ realizes $(t,1,1)$ is $t=2a+c$.
\end{corollary}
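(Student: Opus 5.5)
We would prove the two inequalities separately: that $C_n$ realizes $(2a+c,1,1)$, and that it does not realize $(2a+c+1,1,1)$. Since $n-2=4a+2b+c$ with $b,c\in\{0,1\}$, we have $n=4a+2b+c+2$, so there are four cases according to $(b,c)$. The upper bound will come from Corollaries~\ref{c:abc,2a+b} and~\ref{c:n-aodd,2a+b+2}. The lower bound will come from Lemmas~\ref{l:4r+3s,2r+sss} and~\ref{l:3k+2,kkk}, using that a coloring realizing $(x,y,y)$ with $y\ge1$ also realizes $(x,1,1)$. This is just a reading of Proposition~\ref{p:mod12}, but doing it directly avoids the mod-$12$ bookkeeping.

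\emph{Upper bound.} Put $t=2a+c+1$ and suppose $C_n$ realizes $(t,1,1)$. Corollary~\ref{c:abc,2a+b} gives $n\ge 2t+1=4a+2c+3$. We check each case against this.
\begin{itemize}
\item If $c=1$, the bound reads $n\ge 4a+5$, while $n=4a+2b+3$. So $b=0$ is impossible, and $b=1$ gives $n=4a+5$ with $n-t=2a+3$ odd. Corollary~\ref{c:n-aodd,2a+b+2} then requires $n\ge 2t+3=4a+7$, a contradiction.
\item If $c=0$, the bound reads $n\ge 4a+3$, while $n=4a+2b+2$. So $b=0$ is impossible, and $b=1$ gives $n=4a+4$ with $n-t=2a+3$ odd. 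Corollary~\ref{c:n-aodd,2a+b+2} then requires $n\ge 4a+5$, a contradiction.
\end{itemize}

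\emph{Lower bound.} We write $n=4r+3s$ with $r\ge0$ and $s\ge1$, and apply Lemma~\ref{l:4r+3s,2r+sss}, which gives $(2r+s,s,s)$.
\begin{itemize}
\item $(b,c)=(0,1)$: $n=4a+3$. Take $r=a$, $s=1$, giving $2a+1$.
\item $(b,c)=(0,0)$: $n=4a+2$, and $n\ge5$ forces $a\ge1$. Take $r=a-1$, $s=2$, giving $2a$.
\item $(b,c)=(1,1)$: $n=4a+5$. For $a\ge1$, take $r=a-1$, $s=3$, giving $2a+1$. For $a=0$, $n=5=3\cdot1+2$, and Lemma~\ref{l:3k+2,kkk} gives $(1,1,1)$.
\item $(b,c)=(1,0)$: $n=4a+4$, and $n\ge5$ forces $a\ge1$. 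For $a\ge2$, take $r=a-2$, $s=4$, giving $2a$. For $a=1$, $n=8=3\cdot2+2$, and Lemma~\ref{l:3k+2,kkk} gives $(2,2,2)$.
\end{itemize}
In every case the first term is at least $2a+c$, so $C_n$ realizes $(2a+c,1,1)$.

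The main obstacle is the small exceptional values ($n=5$ and $n=8$), where no decomposition $n=4r+3s$ gives a large enough first term. These are handled by Lemma~\ref{l:3k+2,kkk}. Apart from that, the only care needed is the parity check $n-t$ odd in the two cases where Corollary~\ref{c:abc,2a+b} alone is not sharp enough.
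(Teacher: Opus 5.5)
Your proposal is correct and is essentially the paper's argument. The paper reads the extremal value off Proposition~\ref{p:mod12}, whose proof rests on Lemmas~\ref{l:4r+3s,2r+sss} and~\ref{l:3k+2,kkk} and on Corollary~\ref{c:n-aodd,2a+b+2}; you apply these ingredients directly modulo $4$, together with Corollary~\ref{c:abc,2a+b}, instead of going through the mod-$12$ tables. One small streamlining: with $t=2a+c+1$ you always have $n-t=2a+2b+1$ odd and $n\le 4a+c+4<2t+3$, so Corollary~\ref{c:n-aodd,2a+b+2} alone settles all four upper-bound cases at once, and monotonicity in $t$ then excludes every larger value.
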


\begin{corollary}
Suppose $n\geq5$ and write $n-1=3a+b$, where $a$ is a nonnegative integer and $b\in\{0,1,2\}$. Then the maximum integer $t$ for which the cycle $C_n$ realizes $(t,t,t)$ is $t=a+b-1$.
\end{corollary}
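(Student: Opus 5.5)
The plan is to obtain the result directly from the necessary conditions of Corollaries~\ref{c:abc,2a+b} and~\ref{c:n-aodd,2a+b+2} and the constructions of Lemmas~\ref{l:4r+3s,2r+sss} and~\ref{l:3k+2,kkk}, rather than reading it off the tables of Proposition~\ref{p:mod12}.

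\emph{Upper bound.} Suppose $C_n$ realizes $(t,t,t)$. Corollary~\ref{c:abc,2a+b} with $r=s=t$ gives $n\ge 3t$. If moreover $n-t$ is odd, Corollary~\ref{c:n-aodd,2a+b+2} gives $n\ge 3t+2$. If $C_n$ realizes $(t,t,t)$, then it also realizes $(t',t',t')$ for every $t'\le t$. Hence it suffices to exhibit a realization at $t=a+b-1$ and to rule out $t=a+b$.

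We then split into cases according to $b$.

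For $b=2$, we have $n=3(a+1)$. Lemma~\ref{l:4r+3s,2r+sss} with $r=0$ and $s=a+1$ realizes $(a+1,a+1,a+1)$. Taking $t=a+2$ would require $n\ge 3a+6$, which is false.

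For $b=1$, we have $n=3a+2$ with $a\ge1$. Lemma~\ref{l:3k+2,kkk} realizes $(a,a,a)$. Taking $t=a+1$ would require $n\ge 3a+3$, which is false.

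For $b=0$, we have $n=3a+1$, and $n\ge5$ forces $n\ge7$, so $a\ge2$. Writing $n=4\cdot1+3(a-1)$, Lemma~\ref{l:4r+3s,2r+sss} with $r=1$ and $s=a-1\ge1$ realizes $(a+1,a-1,a-1)$. This sequence dominates $(a-1,a-1,a-1)$, which is therefore realized. For $t=a$ the first bound $n\ge 3a$ holds, so we need the parity refinement. Here $n-t=2a+1$ is odd, so Corollary~\ref{c:n-aodd,2a+b+2} requires $n\ge 3a+2>n$, a contradiction.

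The only delicate step is the case $b=0$. There the crude bound $n\ge 3t$ is insufficient, and the parity condition from Lemma~\ref{l:n-a,even} (via Corollary~\ref{c:n-aodd,2a+b+2}) is exactly what excludes $t=a$. Realizability there also requires using the mixed pattern with $r=1$ rather than the pure $1,2,3$ pattern. Checking the small boundary values ($n=5,6,7$) against the formula confirms that the constraints $s\ge1$ and $a\ge1$ or $a\ge2$ in the lemmas are satisfied.
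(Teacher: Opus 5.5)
Your proof is correct. It rests on the same ingredients as the paper, which simply reads $t$ off the mod-$12$ tables of Proposition~\ref{p:mod12}; those tables are themselves built from Corollaries~\ref{c:abc,2a+b} and~\ref{c:n-aodd,2a+b+2} and Lemmas~\ref{l:4r+3s,2r+sss} and~\ref{l:3k+2,kkk}. Applying these results directly, with a three-case split on $b$, is a tidier and self-contained version of the same reasoning, and it does not depend on the paper's unelaborated ``no gaps'' check for the full tables.
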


%-----------------
\section{Regular graphs with prescribed girth}\label{s:reg}
%-----------------

A result of Kouider~\cite{Koui-04}
states that every $d$-regular graph of girth at least $6$ has b-chromatic number $d+1$. Since every $d$-regular graph $G$ has $\Delta(G)=d$ and $\varphi_S(G)\leq\Delta(G)+1$, this is also the largest possible value of $\varphi_S(G)$. Therefore, if a $d$-regular graph realizes any sequence $S$ of length $d+1$, then necessarily $\varphi_S(G)=d+1$. In particular, Kouider's theorem says that every $d$-regular graph of girth at least $6$ realizes the sequence $(1^{d+1})$. The proof starts with a vertex $x$, which is made into a CDV by assigning different colors to its neighbors. The neighbors of $x$ are then made into CDVs by appropriately coloring their remaining neighbors. The girth assumption guarantees that this construction can be completed without creating conflicts. Thus one obtains at least one CDV in each of the $d+1$ color classes.

This construction naturally raises the following question. Can the vertices in the second neighborhood of $x$ also be made into CDVs? If so, then each of the $d$ colors used on the neighbors of $x$ would have $d$ CDVs, while the color of $x$ would still have one CDV. In other words, the graph would realize the sequence $(d^d,1)$.

In this section, we investigate how far the sequence $(1^{d+1})$ can be increased under girth assumptions. We first show that one additional CDV can always be guaranteed, then study the limitations of this approach by presenting counterexamples, and finally prove that the sequence $(d^d,1)$ is realized by every $d$-regular graph of girth at least $8$.

We begin with the smallest strengthening of Kouider's theorem. It turns out that one additional CDV can always be guaranteed. In other words, we first show that the sequence $(2,1^d)$ is realized by every $d$-regular graph of girth at least $6$.

\begin{theorem}\label{t:girth6}
Let $d\ge 2$ and let $G$ be a $d$-regular graph of girth $g(G)\ge 6$.
Then $G$ realizes the sequence $(2,1^d)$.
\end{theorem}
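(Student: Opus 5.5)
We will build the coloring by hand, following the idea of Kouider's construction, and use the girth assumption to make the relevant neighborhoods tree-like. The plan is to produce two CDVs of one color plus one CDV of each other color. The natural choice is two vertices $x$ and $y$ at distance $2$ (or $3$) that both receive color $1$ and are both made into CDVs; the neighbors of $x$ then serve as CDVs for the remaining colors.

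\textbf{Choosing the two vertices.} Take an edge $xy$. Since $g(G)\ge 6$, the balls of radius $2$ around $x$ and around $y$ together induce a tree: the union of the second neighborhoods of $x$ and $y$ contains no cycle. Rather than coloring $x$ and $y$ alike (impossible, as they are adjacent), pick a neighbor $z\ne y$ of $x$ and a neighbor $w \neq x$ of $y$. Then $z$ and $w$ are at distance $3$. We aim to make $x$ and $w$ the two CDVs of color $1$.

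\textbf{Coloring.} Give $x$ color $1$ and its neighbors colors $2,\ldots,d+1$, with $y$ colored $2$. Then $x$ is a CDV.

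For each neighbor $u$ of $x$ of color $c$, the vertex $u$ has $d-1$ further neighbors. Give them the $d-1$ colors in $\{2,\ldots,d+1\}\setminus\{c\}$. Since $x$ already supplies color $1$, each such $u$ is a CDV.

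Girth $\ge 6$ implies that the sets $N(u)\setminus\{x\}$, for $u\in N(x)$, are pairwise disjoint and independent, and that there are no edges between distinct such sets. Edges between different sets would create $5$-cycles, and edges inside a set would create $4$-cycles (or triangles). Hence the coloring is proper on $N[x]\cup N_2(x)$. Moreover, all $d+1$ colors already have a CDV: $x$ for color $1$, and the neighbors of $x$ for colors $2,\ldots,d+1$.

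\textbf{The extra CDV.} The neighbors of $y$ other than $x$ received colors $3,\ldots,d+1$. Choose one of them, say $w$ with color $3$, and \emph{recolor} $w$ with color $1$ instead. This is allowed because $w$ is not adjacent to $x$ and not adjacent to any other vertex of $N[x]\cup N_2(x)$ besides $y$; the latter again follows from the girth condition. The change breaks $y$'s CDV status only if color $3$ disappears from $N[y]$. To avoid this, assign colors to $N(y)\setminus\{x\}$ so that $y$ still sees every color: $y$ has $d-1$ such neighbors and needs colors $\{1,3,\ldots,d+1\}$ among them together with $x$. Since $x$ already supplies color $1$, no freedom is gained unless $d-1\ge d-1$ colors plus $w$. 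Here lies the main obstacle: $y$'s neighborhood is exactly saturated.

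To avoid this, do not choose $w$ inside $N(y)$. Instead, choose $w\in N_3(x)$, a neighbor of some $v\in N(y)\setminus\{x\}$, and give $w$ color $1$. Then color the $d-1$ neighbors of $w$ other than $v$ so that $w$ sees all colors other than $1$ and the color of $v$. These neighbors lie at distance $4$ from $x$, and they are uncolored; girth $\ge 6$ ensures they are distinct from the already-colored vertices and not adjacent to any vertex of $N_2(x)$ other than $v$. The only cases to check are a short cycle through $w$, or a neighbor of $w$ adjacent to another vertex of $N_2(x)$, which would create a cycle of length at most $5$ through $x$ or $y$. This is the step I expect to need the most care. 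In particular, $v\in N_2(x)$ must have no other neighbor of color $1$ in $N_3(x)$ that we colored.

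Finally, color all remaining vertices greedily. Since every remaining vertex has degree $d$ and there are $d+1$ colors, greedy coloring always succeeds, and it does not destroy the CDVs already established. The result is a b-coloring with $d+1$ colors in which color $1$ has the two CDVs $x$ and $w$, so $G$ realizes $(2,1^d)$.
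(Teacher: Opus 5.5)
\textbf{There is a genuine gap: your proposal fails outright for $C_7$ and is incomplete for $d\ge 3$.} It breaks at the step where you colour the remaining neighbours of $w\in N_3(x)$. You assert that these neighbours are uncoloured, lie at distance $4$ from $x$, and are not adjacent to $N_2(x)$ except via $v$. Girth $\ge 6$ does not give this.

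\begin{itemize}
\item If a neighbour $w'\ne v$ of $w$ lies in $N(u')\setminus\{x\}$ for some $u'\in N(x)\setminus\{y\}$, then $x,y,v,w,w',u'$ is a $6$-cycle, which is allowed.
\item If a neighbour $t$ of $w$ is adjacent to some $s\in N(u')\setminus\{x\}$, then $x,y,v,w,t,s,u'$ is a $7$-cycle.
\end{itemize}
The same $6$-cycles also refute your earlier claim that the radius-$2$ balls around $x$ and $y$ induce a tree. Consequently $w$ may have neighbours whose colours were already fixed arbitrarily in the second neighbourhood. Then $N(w)$ can contain a repeated colour, and $w$ cannot be a CDV.

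Both situations actually occur.
\begin{itemize}
\item In the Heawood graph with the paper's labelling, take $x=0$, $y=1$, $v=2$, $w=3$. Every neighbour $2,4,12$ of $w$ lies in $N_2(x)$. Here $4$ and $12$ belong to the second-neighbourhood sets of $5$ and $13$, and both of those sets receive colour $2$ in your scheme. Nothing stops both $4$ and $12$ from getting colour $2$.
\item In $C_7$ with vertices $0,\dots,6$, $x=0$ and $y=1$, your colouring is forced. Vertices $0,1,6,2,5$ get colours $1,2,3,3,2$, and $w=3$ gets colour $1$. Its other neighbour $4$ would then need colour $2$, but $4$ is adjacent to $5$, which has colour $2$.
\end{itemize}

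\textbf{The missing idea, as in the paper's proof, is to place the second CDV in the second neighbourhood of $x$ and give it the colour of a neighbour of $x$, not the colour of $x$.} In your labelling, take the vertex $z$ of colour $2$ in $N(u)\setminus\{x\}$ for some $u\in N(x)\setminus\{y\}$.

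\begin{itemize}
\item The vertices of $A=N(z)\setminus\{u\}$ cannot lie in $N[x]\cup N_2(x)$, since otherwise there would be a cycle of length at most $5$. So they are genuinely uncoloured, and you can give them the colours $\{1,\ldots,d+1\}\setminus\{2,c(u)\}$.
\item The only possible conflicts are edges from $A$ to the sets $N(u'')\setminus\{x\}$ with $u''\ne u$. These edges form a matching, and their $A$-endpoints carry distinct colours.
\item You can remove these conflicts by permuting colours inside each such set. This keeps its colour set unchanged, so $u''$ stays a CDV. It works whenever $d\ge 3$; the case $d=2$ needs the separate cycle argument.
\end{itemize}
Then $y$ and $z$ are two CDVs of colour $2$.

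Repairing your own choice of $w$ at distance $3$ would be harder. You would need a derangement-type assignment across several second-neighbourhood sets so that all precoloured neighbours of $w$ get distinct colours. You would also have to handle uncoloured neighbours of $w$ that are adjacent to several precoloured vertices.
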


\begin{proof}

If $d=2$, then $G$ is a cycle of length at least $6$, and the result follows from Proposition~\ref{p:mod12}. We may therefore suppose that $d\geq 3$.

Choose a vertex $x\in V(G)$. Let $N(x)=\{x_1,\ldots,x_d\}$. Assign color $d+1$ to vertex $x$ and color vertex $x_i$ with color $i$ for every $i\in\{1,\ldots,d\}$. Thus $x$ is a CDV. For every $i\in\{1,\ldots,d\}$, let $B_i=N(x_i)\setminus\{x\}$. 
Since $G$ has no cycles of length $3$,
no $x_i$ is in any $B_j$ and each $B_i$ forms an independent set.
Because there are no cycles of length 4, the sets $B_1,\ldots,B_d$ are pairwise disjoint.
Moreover, there are no edges between $B_i$ and $B_j$ for $i\neq j$, since such an edge would give a cycle of length $5$. Each set $B_i$ contains $d-1$ vertices. Color the vertices of $B_i$ with the colors in
$\{1,\ldots,d\}\setminus\{i\}$, assigning a different color to each vertex. It follows that each $N[x_i]$ contains every color: $x_i$ itself has color $i$, $x$ has color $d+1$, and all remaining colors appear on the vertices of $B_i$. Hence every vertex $x_i$ is a CDV. 

Choose an index $i\neq 1$ and let $y$ be the vertex of $B_i$ colored with color $1$. We shall modify the coloring so that $y$ also becomes a CDV. Let $A=N(y)\setminus\{x_i\}$. The $d-1$ vertices of $A$ are still uncolored. Color the vertices of $A$ with the colors in $\{1,\ldots,d+1\}\setminus\{1,i\}$, assigning a different color to each vertex. If this coloring creates no conflict with the already colored vertices, then $N[y]$ contains every color and $y$ is a CDV. We now show that all possible conflicts can be removed without changing the set of colors appearing in any $B_j$. First observe that there are no edges between $A$ and $B_i\setminus\{y\}$, since such an edge would give a $4$-cycle. Fix $j\neq i$ and consider the edges between $A$ and $B_j$. These form a matching, since if a vertex of $A$ had two neighbors in $B_j$, or if a vertex of $B_j$ had two neighbors in $A$, then $G$ would contain a $4$-cycle. The vertices of $A$ have pairwise different colors. Therefore, for every vertex of $B_j$, at most one color is forbidden by its possible neighbor in $A$, and the forbidden colors at different vertices of $B_j$ are different. We may now permute the colors inside $B_j$ so that no vertex receives the color of its neighbor in $A$. To see this, consider only the vertices of $B_j$ whose forbidden color belongs to $\{1,\ldots,d\}\setminus\{j\}$. If there are at least two such vertices, permute their forbidden colors cyclically so that no vertex receives its own forbidden color. If there is exactly one such vertex, exchange its forbidden color with the color of any other vertex of $B_j$, which exists because $d\geq 3$. The remaining colors may be assigned arbitrarily. We can perform this recoloring independently for every $j\neq i$, since the sets $B_j$ are pairwise nonadjacent and these changes do not create new conflicts. Also, every $B_j$ still contains exactly the colors $\{1,\ldots,d\}\setminus\{j\}$. Hence all vertices $x_1,\ldots,x_d$ remain CDVs.

After these recolorings, the coloring is proper on all colored vertices. Since $N[y]$ contains color $1$ on $y$, color $i$ on $x_i$, and every other color on the vertices of $A$, $y$ is a CDV. Since $x_1$ is also a CDV of color $1$, color $1$ contains at least two CDVs, while every other color contains at least one CDV. Finally, color all remaining vertices greedily. At each step, a vertex has at most $d$ colored neighbors, while $d+1$ colors are available. Therefore the coloring can be completed to a proper $(d+1)$-coloring of $G$. The CDVs constructed above remain CDVs, and hence $G$ realizes $(2,1^d)$.
\end{proof}

\medskip

Theorem~\ref{t:girth6} shows that one additional CDV can always be obtained under the same girth assumption as in Kouider's theorem. It is therefore natural to ask whether two additional CDVs of the same color can also be guaranteed. The following proposition shows that this is not possible in general. A candidate for a counterexample is the Heawood graph, which is the cubic graph of girth $6$ with the fewest vertices.
Since it satisfies the assumptions of Theorem~\ref{t:girth6}, it is natural to ask whether it realizes the sequence $(3,1^3)$ in addition to $(2,1^3)$.

\begin{proposition}\label{p:Heawood}
The Heawood graph is $3$-regular with girth 6 but does not realize the sequence $(3,1,1,1)$. 
\end{proposition}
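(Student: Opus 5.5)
The plan is a structural case analysis. We use the standard description of the Heawood graph $H$ as the point--line incidence graph of the Fano plane. The properties we need are these. $H$ is bipartite with sides $P$ (points) and $L$ (lines), each of size $7$. Two vertices on the same side have exactly one common neighbour. Two nonadjacent vertices on opposite sides are at distance $3$. The automorphism group of $H$, including the dualities swapping $P$ and $L$, has order $336$ and acts transitively on the configurations we will meet. By Proposition~\ref{p:ineqs}, a coloring realizing $(3,1,1,1)$ uses exactly $4=\Delta(H)+1$ colors. In such a coloring a vertex is a CDV if and only if its three neighbours receive the three colors different from its own.

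Suppose, for a contradiction, that a $4$-coloring realizes $(3,1,1,1)$ with CDVs $u,v,w$ of color $1$ and at least one CDV of each of the colors $2,3,4$. The basic constraints are the following.
\begin{itemize}
\item The neighbours of each of $u,v,w$ are colored bijectively with $\{2,3,4\}$.
\item The color-$1$ class is independent, so $u,v,w$ are pairwise nonadjacent.
\item Whenever two of $u,v,w$ lie on the same side, their unique common neighbour is a single vertex seen by both, so its color is shared between their two rainbow neighbourhoods.
\end{itemize}
Using the automorphisms, I will reduce the position of $\{u,v,w\}$ to a short list of cases.
\begin{itemize}
\item All three on one side, say points, and collinear.
\item All three on one side, say points, and non-collinear.
\item Two points $u,v$ and one line $w$ incident with neither. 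Here $w$ either passes through the third point of the line $uv$, or it does not.
\end{itemize}
Duality covers the remaining cases with the sides swapped.

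In each case I will fix the colors on $N(u)\cup N(v)\cup N(w)$ up to permuting $\{2,3,4\}$. The key observation is that $N[u]\cup N[v]\cup N[w]$ already covers most of the $14$ vertices: $12$ of them when the neighbourhoods meet in a common vertex, and about $10$--$11$ in the other cases. So only a few vertices remain free. A CDV $z$ of color $c\in\{2,3,4\}$ must be one of these colored vertices or one of the few free ones. Moreover, its neighbours must see color $1$ exactly once and the other two colors of $\{2,3,4\}\setminus\{c\}$ once each. I will then check, for each candidate $z$, that its neighbourhood is forced to repeat a color or to miss one. The forcing comes either from the distance-$3$ property, which pins down the shared neighbours, or from properness on the remaining free vertices. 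This shows that some color among $2,3,4$ has no CDV, which is the desired contradiction.

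The main obstacle is keeping this verification honest rather than exhaustive by brute force. There is a fair amount of freedom in assigning colors to the free vertices, so I would first try a counting argument to prune it. A CDV of color $c\ne1$ needs a color-$1$ neighbour. The color-$1$ class is independent and contains $u,v,w$ plus at most a couple of extra vertices. So the candidate CDVs of colors $2,3,4$ lie in $N(u)\cup N(v)\cup N(w)$ or in the neighbourhoods of these few extra color-$1$ vertices. This should cut each case down to a handful of subcases. If a case still resists, the fallback is an explicit check of the colorings using the concrete labelling of the Fano plane by $\mathbb{Z}_7$, with points $i$ and lines $\{i,i+1,i+3\}$.
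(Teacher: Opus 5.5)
Your case split is the same as the paper's. Up to automorphisms, the three colour-$1$ CDVs are one of the following: three collinear points (a common neighbour), three non-collinear points (alternate vertices of a $6$-cycle), or two points $u,v$ with a line $w$ through neither. Your second subcase of the last configuration is empty, because the line $uv$ meets $w$ in a point that is neither $u$ nor $v$.

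The real gap is that the proposal stops where the proof begins. In none of the three configurations do you derive the forced colouring or name the colour in $\{2,3,4\}$ that ends up without a CDV. The sentence ``I will then check, for each candidate $z$'' is the entire argument, and it is deferred.

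The facts you plan to prune with are also wrong. $N[u]\cup N[v]\cup N[w]$ has $10$, $9$ and $11$ vertices in the three configurations, not $12$ in the collinear one. The colour-$1$ class is not ``$u,v,w$ plus at most a couple'', since in the first two configurations all seven points may receive colour $1$. So the counting shortcut fails as stated, and the contradiction has to come from explicit forcing.

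Here is what closes each case, and it is essentially what the paper does in its $\mathbb{Z}_{14}$ labelling.

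\emph{Collinear points on a line $\ell$.} Every other line meets $\ell$ in exactly one of $u,v,w$, so all seven lines lie in $N(u)\cup N(v)\cup N(w)$. Suppose $\ell$ has colour $c$. Then no other line has colour $c$, and $\ell$ itself sees colour $1$ three times. A point of colour $c$ off $\ell$ has no neighbour of colour $1$. Hence colour $c$ has no CDV.

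\emph{Non-collinear points.} The three side lines get distinct colours. This forces each remaining line through $u$, $v$ or $w$ to carry the colour of the opposite side. These three lines are concurrent, so their common point gets colour $1$. Now every line through $u$, $v$ or $w$ sees colour $1$ twice, and the third point of each side sees that side's colour twice. The only possible CDV for colours $2,3,4$ is therefore the one line missing $u,v,w$, and it cannot serve all three colours.

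\emph{Two points and a line.} Write $w=\{t,a,b\}$ with $t$ on the line $uv$, and let $p,q$ be the other two points off $w$. Label them so that the lines $up,vq$ pass through $a$ and the lines $uq,vp$ pass through $b$. Give $t$ colour $2$ and the line $uv$ colour $3$; after swapping $p$ and $q$, give $a$ colour $3$ and $b$ colour $4$. Properness then forces $uq,vp$ to have colour $2$ and $up,vq$ to have colour $4$, so $p,q\in\{1,3\}$. The colour-$4$ vertices $b$, $up$, $vq$ each see a repeated colour, so the line $pq$ must be a CDV of colour $4$. But then $p$, $q$ and $a$ each see colour $4$ twice, and $uv$ sees colour $1$ twice, so colour $3$ has no CDV.

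Without these verifications your proposal is a plan rather than a proof.
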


\begin{proof}
Let $H$ be the graph whose vertices are the residue classes modulo 14,
where each even vertex $i$ is adjacent to the vertices $i-1$, $i+1$ and $i+5$.
This is a representation of the Heawood graph, drawn on the left in Figure~\ref{f:Heawood}.
We make three observations:
$H$ is 3-regular and has girth 6, as claimed;
$H$ is bipartite with diameter 3;
and regarding automorphisms, $H$ is vertex-transitive.

\begin{figure}[ht]
\centering
\begin{tikzpicture}[scale=.75]
\GraphInit[vstyle=Normal]
\grHeawood[prefix=]
\end{tikzpicture}
\hspace{1cm}
\begin{tikzpicture}[scale=.75]
\SetVertexNoLabel
\grHeawood
\AssignVertexLabel{a}{[4],1-4,23,3,34,1-2,12,2,24,4,14,1,13,1-3}
\end{tikzpicture}
\caption{Two views of the Heawood graph}
\label{f:Heawood}
\end{figure}

Another representation is to have the vertices be the 1- and 2-element subsets of $[4]=\{1,2,3,4\}$, together with the set itself and its partitions into two 2-element subsets (which we designate based on which element pairs with 1), with edges given by inclusion.
An example showing the equivalence is on the right of Figure~\ref{f:Heawood}.
The importance of this version is to make it clear how, fixing a vertex (the full set $[4]$, which we identify with vertex 0), we can freely permute those at distance 3 from it (the four singletons, which are then identified with vertices 3, 7, 9 and 11).

Suppose, contrary to our assertion, that $H$ realizes $(3,1,1,1)$.
Then $H$ has a proper $4$-coloring in which one color contains at least three CDVs.
We may assume that this is color $1$,
and locate three CDVs of this color. 
Combining the propriety of the coloring with the bipartiteness and diameter of the graph,
there are three possible arrangements of these three vertices:
\begin{enumerate}
\item\label{i:comNei} they have a common neighbor;
\item\label{i:altHex} they are alternating vertices of a $6$-cycle;
\item\label{i:iso} one of them is at distance $3$ from each of the other two, which are at distance $2$ from each other.
\end{enumerate}
Invoking automorphisms of $H$ and permutations of colors 2, 3 and 4,
we show in each case that there cannot be CDVs of all three of these remaining colors.

In case~\ref{i:comNei},
by moving their common neighbor to vertex $0$,
we may assume that the CDVs of color $1$ are vertices $1$, $5$ and $13$.
Observe that the neighbors of these CDVs include all of the even vertices.
By permuting colors $2$, $3$ and $4$, we may assume that vertex $0$ has color $2$, which means that the rest of the even vertices must receive colors 3 and 4.
Now, the neighbors of vertex 0 all have color 1 and the neighbors of vertices 3, 7, 9 and 11 only have neighbors colored 3 and 4.
Thus, color 2 has no CDV, which is a contradiction.

In case~\ref{i:altHex},
we choose one of the CDVs of color 1 and move it to vertex 0.
Leaving that fixed, we move the vertex opposite it in the 6-cycle to vertex 3.
Fixing both of those, we permute vertices 7, 9 and 11 so that the one that does not have a neighbor in the 6-cycle is vertex 11.
We now have that the three CDVs of color $1$ are vertices $0$, $2$, and $4$, which are alternating vertices of the $6$-cycle $0,1,2,3,4,5$. 
By permuting colors $2$, $3$ and $4$, we may assume that they are assigned to vertices $1$, $3$ and $5$, respectively.
Since vertex $0$ is a CDV, its third neighbor $13$ must receive color $3$. 
Similarly, since vertices $2$ and $4$ are CDVs, their third neighbors $7$ and $9$ must receive colors $4$ and $2$, respectively. 
Vertex $8$ is adjacent to vertices 7, 9 and 13, which have colors 4, 2 and 3, so
propriety dictates that vertex $8$ gets color $1$.
Now vertices 6, 10 and 12 each have repeated colors in their neighborhoods, so vertex $11$ is the only one that can possibly be a CDV of color $2$, $3$ or $4$. 
Therefore these three colors cannot all contain CDVs, again giving a contradiction.

In case~\ref{i:iso},
we move the CDV of color 1 that is distance 3 from the other two to vertex 0.
Leaving that fixed, we move the other two CDVs of color 1 to vertices 3 and 7.
Observe that fixing these three vertices also fixes vertices 1, 2 and 10.
By permuting colors $2$ and $3$, we may assume that they are assigned to vertices 1 and 2, respectively.
Since vertex $0$ is a CDV of color 1 and its neighbor 1 has color 2, its other neighbors $5$ and $13$ must receive colors $3$ and $4$.
Fixing vertices 0, 3 and 7, we permute vertices 9 and 11 so that vertex 5 has color 3, leaving vertex 13 with color 4.
Since vertex $3$ is a CDV, its remaining neighbors 4 and 12 must receive colors 2 and 4, with vertex 12 being the one to get color 2 for propriety.
Examining vertex $7$ similarly, we see that vertices $6$ and $8$ must receive colors $4$ and $2$, respectively.
The vertices $9$ and $11$ are adjacent to vertices of colors $2$ and $4$, so each of them must receive color $1$ or $3$.
Now vertex $10$ is the only possible CDV of color $4$.
However, with vertices colored as described so far, there is no vertex that can be a CDV of color $3$.
Thus colors $2$, $3$ and $4$ cannot all contain CDVs, yielding a final contradiction.

Each of the three possible arrangements of three CDVs of color $1$ leads to a contradiction. Therefore the Heawood graph does not realize $(3,1,1,1)$.
\end{proof}

\medskip

The Heawood graph shows that girth at least $6$ is not sufficient to guarantee two extra CDVs of the same color, since it does not realize the sequence $(3,1^3)$.
One might hope that having the two extra CDVs be of different colors might change matters, but Corollary~\ref{c:n-aodd,2a+b+2} shows that the cycle $C_7$ does not realize the sequence $(2,2,1)$.
We wonder what can be guaranteed if we combine features of these two exceptions.
The next theorem shows that if we require degree at least 3 (as in the Heawood graph but not the 7-cycle) and girth at least 7 (as in the 7-cycle but not the Heawood graph), then we can put an extra CDV into every color class.

\begin{theorem}\label{t:girth7}
Let $d\ge 3$ and let $G$ be a $d$-regular graph of girth $g(G) \ge 7$.
Then $G$ realizes the sequence $(2^{d+1})$.
\end{theorem}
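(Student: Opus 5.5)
We extend the construction from the proof of Theorem~\ref{t:girth6}, now exploiting girth $7$ so that the second and third neighbourhoods of a fixed vertex behave like a tree. Choose $x$ with $N(x)=\{x_1,\ldots,x_d\}$. Colour $x$ with $d+1$ and $x_i$ with $i$. As before, let $B_i=N(x_i)\setminus\{x\}$ and colour $B_i$ bijectively with $\{1,\ldots,d\}\setminus\{i\}$. Girth at least $6$ makes the sets $B_i$ pairwise disjoint, independent and mutually nonadjacent, so $x$ and every $x_i$ are CDVs. This gives one CDV in each of the $d+1$ colours.

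To obtain the second CDV of each colour $c\in\{1,\ldots,d\}$, choose a vertex $y_c\in B_{i(c)}$ of colour $c$, with $i(c)\ne c$. For colour $d+1$ we need a CDV outside $N[x]$. We would take a vertex $z\in N(y)\setminus\{x_i\}$ for some chosen $y\in B_i$, and colour $z$ with $d+1$.

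Concretely, fix distinct vertices $y_1,\ldots,y_d$ as follows. Pick a permutation $\sigma$ of $\{1,\ldots,d\}$ with no fixed points, for example $\sigma(c)=c+1 \bmod d$. Let $y_c$ be the vertex of $B_{\sigma(c)}$ of colour $c$. For each $y_c$, let $A_c=N(y_c)\setminus\{x_{\sigma(c)}\}$, a set of $d-1$ vertices at distance $3$ from $x$. Colour $A_c$ bijectively with $\{1,\ldots,d+1\}\setminus\{c,\sigma(c)\}$, so that $y_c$ becomes a CDV. One vertex of $A_1$ then receives colour $d+1$; call it $z$. Make $z$ a CDV by colouring $N(z)\setminus\{y_1\}$, a set of $d-1$ vertices at distance $4$ from $x$, with $\{2,\ldots,d\}$.

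The girth hypothesis controls the interactions:
\begin{itemize}
\item Distinct $A_c$ are disjoint, since a shared vertex would give a cycle of length at most $6$ through $x$.
\item The sets $A_c$ are mutually nonadjacent and have no edges to $B_j$ other than the edges to their own $y_c$, again by the absence of cycles of length at most $6$.
\item Each $A_c$ is independent, since girth is at least $4$.
\end{itemize}
So the colouring of $N[x]\cup\bigcup_j B_j\cup\bigcup_c A_c$ is proper, with no recolouring of the $B_j$ needed; this is where girth $7$ replaces the matching and permutation argument of Theorem~\ref{t:girth6}.

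The main obstacle is the last step, the neighbourhood $N(z)\setminus\{y_1\}$. These vertices are at distance $4$ from $x$, and a cycle of length $7$ can close through $x$. So a vertex $w\in N(z)\setminus\{y_1\}$ may be adjacent to some vertex of $A_c$ with $c\ne1$, or to a vertex of $B_j$, which happens when the distance from $x$ is really $3$, not $4$. Girth $7$ still forces each such $w$ to have at most one neighbour in the already coloured part outside $z$: two such neighbours would give two short paths back to the tree and hence a cycle of length at most $6$. Distinct $w$'s also have distinct such neighbours. Thus each vertex to be coloured has at most one forbidden colour coming from its outside neighbour. We then choose a bijection from $N(z)\setminus\{y_1\}$ onto $\{2,\ldots,d\}$ avoiding these forbidden colours, exactly as in the proof of Theorem~\ref{t:girth6}: cyclically shift the forbidden colours, or swap with another vertex, which is possible because $d-1\ge2$ when $d\ge3$.

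There is also the freedom in which vertex of $A_1$ gets colour $d+1$, or which $y_c$ we use to host $z$. If some degenerate configuration causes trouble, we use this freedom to move $z$.

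Finally, colour the remaining vertices greedily. Each has at most $d$ coloured neighbours and $d+1$ colours are available, so the colouring extends to a proper colouring, and the $2(d+1)$ CDVs constructed remain CDVs. Hence $G$ realizes $(2^{d+1})$.
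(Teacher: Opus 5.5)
There is a genuine gap. With a construction centred at a single vertex $x$, girth $7$ does not make the ball of radius $3$ around $x$ tree-like, and your girth bookkeeping is off by one there. Your claim that distinct sets $A_c$ are mutually nonadjacent is false. An edge $uv$ with $u\in A_c$, $v\in A_{c'}$, $c\neq c'$, closes the cycle $x\,x_{\sigma(c)}\,y_c\,u\,v\,y_{c'}\,x_{\sigma(c')}\,x$. Its length is exactly $7$, since $\sigma(c)\neq\sigma(c')$, and girth $7$ allows it. Compare Theorem~\ref{t:girth8}, where girth $8$ is invoked precisely to exclude this $7$-cycle between the sets $C_{ij}$. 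Every palette $\{1,\ldots,d+1\}\setminus\{c,\sigma(c)\}$ contains $d+1$, and the palettes overlap heavily, so arbitrary bijective colourings of the $A_c$ can be improper. For instance, $z$ may be adjacent to the colour-$(d+1)$ vertex of some $A_c$ with $c\neq 1$. So the step ``the colouring is proper, with no recolouring needed'' fails. You would need a real argument to choose the bijections, for example Hall's theorem. It would use that there is at most one edge between any two $A_c$'s, while a single vertex of $A_c$ can see several other $A_{c'}$'s via $8$-cycles through $x$.

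The analysis of $N(z)\setminus\{y_1\}$ has the same defect. A vertex $w$ there may already lie in some $A_c$ (the $7$-cycle above with $u=z$), so it is already coloured. If it does not, it can have neighbours in several different $A_c$'s, since each pair of them closes an $8$-cycle through $x$. It can also have a neighbour in some $B_j$ with $j\neq\sigma(1)$, closing the $7$-cycle $w\,z\,y_1\,x_{\sigma(1)}\,x\,x_j\,b\,w$. So $w$ can have several forbidden colours, and different $w$'s can have the same forbidden colour via different neighbours. The hypotheses behind the cyclic-shift trick of Theorem~\ref{t:girth6} (one forbidden colour per vertex, distinct across vertices) are therefore unavailable. ``Use the freedom to move $z$'' is not a substitute for an argument.

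The paper avoids all of this by centring the construction at an edge $ab$ rather than a vertex. The second layer $A_i,B_i$ is then at distance $2$ from $\{a,b\}$, so any edge inside it closes a cycle of length at most $6$. Colours $3,\ldots,d+1$ get two CDVs each ($a_i$ and $b_i$) for free. Only two deeper CDVs, of colours $1$ and $2$, are needed, and their neighbourhoods $C$ and $D$ have few, explicitly controlled interactions.
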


\begin{proof}
We construct a proper $(d+1)$-coloring of $G$ with at least two CDVs of each color.

Choose adjacent vertices $a$ and $b$. Assign color $1$ to $a$ and color $2$ to $b$. Define $\{a_3,\ldots,a_{d+1}\}=N(a)\setminus\{b\}$ and $\{b_3,\ldots,b_{d+1}\}=N(b)\setminus\{a\}$.
Because $G$ has no triangles, these vertices are distinct.
For the same reason, there are no edges among the $a_i$'s nor among the $b_i$'s.
There are also no edges between any $a_i$ and $b_j$, since such an edge would form a $4$-cycle together with $a$ and $b$.
The coloring is proper so far, with vertices $a$ and $b$ being CDVs.

For each $i\in\{3,\ldots,d+1\}$, let $A_i=N(a_i)\setminus\{a\}$ and $B_i=N(b_i)\setminus\{b\}$.
Each of the $A_i$'s and $B_i$'s contains $d-1$ vertices.
The $A_i$'s and $B_i$'s are pairwise disjoint, because a common vertex of two such sets would give a cycle of length at most $5$.
Moreover, there are no edges between vertices belonging to sets among the $A_i$'s and $B_i$'s, since such an edge would give a cycle of length at most $6$.
Neither is possible since the girth of $G$ is at least $7$.

We aim to construct a second CDV of color $2$. Choose a vertex $x\in A_3$ and assign color $2$ to it. Let $C=N(x)\setminus\{a_3\}$. The set $C$ contains $d-1$ uncolored vertices. To make $x$ a CDV, these vertices must receive the colors in $\{1,\ldots,d+1\}\setminus\{2,3\}$,
each exactly once.
We consider possible edges between $C$ and the sets defined above.
Since the girth of $G$ is at least $7$, no vertex of $C$ is adjacent to a vertex of $A_3\setminus\{x\}$ and there are no edges between $C$ and $A_i$ for any $i\in\{4,\ldots,d+1\}$.
Also, each vertex of $C$ has a neighbor in at most one of the sets $B_3,\ldots,B_{d+1}$, and no two vertices of $C$ have neighbors in the same set $B_i$. Otherwise, a cycle of length at most $6$ would be obtained. 
Assign the colors from $\{1,\ldots,d+1\}\setminus\{2,3\}$ to the vertices of $C$, using each color exactly once.
The resulting coloring is still proper, since the only colored vertex that is adjacent to one in $C$ is $x$.
Because $N[x]$ contains color $2$ on $x$, color $3$ on $a_3$, and every other color on the vertices of $C$, now $x$ is a CDV of color $2$.

We next construct a second CDV of color $1$. By the properties above, at most one vertex of $B_{d+1}$ has a neighbor in $C$. Since $|B_{d+1}|=d-1\geq 2$, we may choose a vertex $y\in B_{d+1}$ that has no neighbor in $C$. Assign color $1$ to $y$, and let $D=N(y)\setminus\{b_{d+1}\}$. The set $D$ contains $d-1$ uncolored vertices. To make $y$ a CDV, the vertices of $D$ must receive the colors in $\{2,3,\ldots,d\}$, each exactly once. Again, the girth condition gives several useful properties. 
No vertex of $D$ is adjacent to one in $B_{d+1}\setminus\{y\}$.
There are no edges between $D$ and $B_i$ for $i \le d$. Each vertex of $D$ has a neighbor in at most one of the sets $A_3,\ldots,A_{d+1}$, and no two vertices of $D$ have neighbors in the same set $A_i$. 
There is at most one edge between $C$ and $D$.
If such an edge exists, its endpoint in $D$ has no neighbor in $A_3$.
A violation of any of these properties would give a cycle of length at most $6$. 
We now color the vertices of $D$.
If there is an edge between $C$ and $D$, let $v$ be its endpoint in $D$ and assign color $2$ to $v$. This creates no conflict because $v$ is not adjacent to $x$ and no vertex of $C$ has color $2$.
If a vertex in $D$ has a neighbor in $A_3$, assign color 3 to that vertex.  This can also be done without conflict.
Assign the remaining unused colors from $\{2,\ldots,d\}$ to the remaining vertices of $D$, using each color exactly once.
The coloring is still proper since none of these vertices have neighbors that have already been colored.
Since $N[y]$ contains color $1$ on $y$, color $d+1$ on $b_{d+1}$, and every color from $2$ to $d$ on the vertices of $D$, we have made $y$ a CDV of color $1$.

We finish by coloring the remaining vertices in the $A_i$'s and $B_i$'s so that all of the $a_i$'s and $b_i$'s become CDVs.
For each $i\in\{3,\ldots,d+1\}$, we want the vertices in $A_i$ to get the colors in $\{1,\ldots,d+1\}\setminus\{1,i\}$, each exactly once, recalling that $x\in A_3$ already has color 2.
If $d=3$ and the remaining vertex in $A_3$ has a neighbor in $D$, then that neighbor has color 2 or 3, so we assign the vertex the required color 4 without conflict.
In any other case, at most one vertex in a given $A_i$ can have a neighbor (in $D$) that is already using one of the required colors.
Since such a vertex can have at most one such neighbor and there are at least two required colors, we assign a required color to such a vertex without conflict.
Similarly, we want the vertices in each $B_i$ to get the colors in $\{1,\ldots,d+1\}\setminus\{2,i\}$, each exactly once, recalling that $y\in B_{d+1}$ already has color 1.
If $d=3$ and the remaining vertex in $B_{d+1}$ has a neighbor in $C$, then that neighbor has color 1 or 4, so we assign the vertex the required color 3 without conflict.
In any other case, at most one vertex in a given $B_i$ can have a neighbor (in $C$) that is already using one of the required colors.
Since such a vertex can have at most one such neighbor and there are at least two required colors, we assign a required color to such a vertex without conflict.
For the remaining uncolored vertices among the $A_i$'s and $B_i$'s, there are no edges to vertices that already have a required color, so the remaining required colors may be assigned arbitrarily. Once these are complete, each $A_i$ (resp., $B_i$) contains the colors required to make $a_i$ (resp., $b_i$) a CDV. 

Now $a$, $b$, $a_3,\ldots,a_{d+1}$, $b_3,\ldots,b_{d+1}$, $x$, and $y$ are all CDVs. In particular, every color in $\{1,\ldots,d+1\}$ has at least two CDVs.
(Color $1$ has the CDVs $a$ and $y$, color $2$ has the CDVs $b$ and $x$, and for every $i\in\{3,\ldots,d+1\}$, the vertices $a_i$ and $b_i$ are CDVs of color $i$.) 
Finally, color all remaining vertices greedily. At every step, an uncolored vertex has at most $d$ colored neighbors, while $d+1$ colors are available. Therefore the coloring can be completed to a proper $(d+1)$-coloring of $G$. The previously constructed CDVs remain CDVs, and thus $G$ realizes $(2^{d+1})$.
\end{proof}

\medskip

Theorem~\ref{t:girth7} naturally raises the question whether the minimum girth assumption can be reduced from $7$ to $6$. As we have already argued, this is not possible for degree 2, since the cycle $C_7$ does not realize $(2,2,1)$. However, this example does not exclude the possibility for regular graphs of larger degree. The situation changes already for cubic graphs. The next theorem shows that every $3$-regular graph of girth at least $6$ realizes $(2^4)$.

\begin{theorem}\label{t:cubicGirth6}
Let $G$ be a $3$-regular graph of girth $g(G)\ge 6$.
Then $G$ realizes the sequence $(2,2,2,2)$.
\end{theorem}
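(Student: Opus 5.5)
The plan is to adapt the edge-based construction in the proof of Theorem~\ref{t:girth7} to $d=3$. Girth $7$ was used there only to control interactions among the second-level sets and the new sets $C$ and $D$. With girth $6$ these interactions become possible, so the plan is to track them explicitly, using that every set involved has size $d-1=2$.

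\emph{Setup.} Choose adjacent vertices $a,b$ and color them $1$ and $2$. Let $N(a)\setminus\{b\}=\{a_3,a_4\}$ and $N(b)\setminus\{a\}=\{b_3,b_4\}$, and give $a_i,b_i$ color $i$. Put $A_i=N(a_i)\setminus\{a\}$ and $B_i=N(b_i)\setminus\{b\}$, each of size $2$. Girth $6$ makes the six vertices $a,b,a_3,a_4,b_3,b_4$ distinct, independent apart from the edges at $a$ and $b$, and makes the four sets $A_3,A_4,B_3,B_4$ pairwise disjoint; a common vertex would give a cycle of length at most $5$.

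The new feature is that edges between $A_i$ and $B_j$ may now exist, since they give $6$-cycles. Edges inside $A_3\cup A_4$ or inside $B_3\cup B_4$ still cannot occur, as they would create cycles of length at most $5$. Moreover, the edges between $A_i$ and $B_j$ form a matching, because two such edges sharing an endpoint would produce a $4$-cycle.

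To make each $a_i$ a CDV, $A_i$ must receive $\{2,j\}$, where $\{i,j\}=\{3,4\}$. To make each $b_i$ a CDV, $B_i$ must receive $\{1,j\}$. The colors $3$ and $4$ then occur twice as CDVs, via $a_i$ and $b_i$, so they are already handled.

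\emph{Extra CDVs of colors $1$ and $2$.} It remains to find a second CDV of color $2$ and a second CDV of color $1$. Take $x\in A_3$ with color $2$. Then $x$ needs its two neighbors in $C=N(x)\setminus\{a_3\}$ to carry colors $1$ and $4$. Similarly, take $y\in B_4$ with color $1$, whose set $D=N(y)\setminus\{b_4\}$ must carry $\{2,3\}$.

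The vertex of $A_3$ other than $x$ gets color $4$, and the vertex of $B_4$ other than $y$ gets color $3$. These, together with the remaining vertices of $A_4$ and $B_3$, may be adjacent across the sets. The potential conflicts are:
\begin{enumerate}
\item a cross edge between $A_i$ and $B_j$ joining two equal colors, which can only be the color $3$ or $4$;
\item $C$ or $D$ overlapping the sets $B_j$ or $A_j$, respectively. With girth $6$, $C$ can intersect $B_3\cup B_4$: a vertex of $B_j$ adjacent to $x$ gives a $6$-cycle $x,a_3,a,b,b_j,\cdot$. Likewise $D$ can intersect $A_3\cup A_4$.
\end{enumerate}

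The plan is to use the freedom in the choice of the edge $ab$, of $x\in A_3$ (or $A_4$), of $y\in B_3$ or $B_4$, and of the bijections inside each $2$-set. These choices should route every forced color coincidence onto a consistent assignment. For example, if $x$ has a neighbor $w\in B_j$, then $w$ is already required to have a color in $\{1,j\}$. In that case choose $j$ so that $w$ gets color $1$ or $4$, which is consistent with the requirements on both $C$ and $B_j$.

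\emph{Main obstacle.} The hard step is this local case analysis of the matching edges between the $A_i$ and the $B_j$, and of overlaps of $C$ and $D$ with them. The Heawood graph shows that girth $6$ is tight for related statements, so the case analysis cannot be skipped. I would first count the possible configurations by looking at the bipartite graph of cross edges between $A_3\cup A_4$ and $B_3\cup B_4$, which is a matching with at most $4$ edges.

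If the configuration is bad for $ab$, I would pick instead an edge $ab$ lying on few $6$-cycles. If all edges are bad, I would fall back on the Heawood graph, which is vertex- and edge-transitive, and check it by an explicit coloring realizing $(2,2,2,2)$.

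\emph{Completion.} Once all of $a,b,a_3,a_4,b_3,b_4,x,y$ are CDVs, every color has two CDVs. The remaining vertices are then colored greedily, each having at most $3$ colored neighbors with $4$ colors available, exactly as in Theorem~\ref{t:girth7}.
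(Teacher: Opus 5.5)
Your proposal stops where the actual proof has to begin. You set up the edge-based scheme of Theorem~\ref{t:girth7} and list some conflicts that girth $6$ permits. You then assert that the freedom in $ab$, $x$, $y$ and the bijections \emph{should} make everything consistent. No argument shows that a consistent choice always exists, and that case analysis is the whole content of the theorem beyond Theorem~\ref{t:girth7}.

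The groundwork you propose for that analysis is also wrong in places:
\begin{itemize}
\item The cross edges between $A_3\cup A_4$ and $B_3\cup B_4$ need not form a matching. Two cross edges at $u\in A_3$ give a $4$-cycle only if they enter the same $B_j$. Neighbors $v\in B_3$ and $v'\in B_4$ give the admissible $6$-cycle $u\,v\,b_3\,b\,b_4\,v'\,u$. In the Heawood graph, every vertex of $A_3\cup A_4$ has one neighbor in each of $B_3$ and $B_4$, so the cross graph is $2$-regular with $8$ edges.
\item Your list of conflicts is incomplete. The vertices $x$ and $y$ may have a common neighbor, closing a $7$-cycle through $x,a_3,a,b,b_4,y$; that vertex would need a color in $\{1,4\}\cap\{2,3\}=\emptyset$. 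Also, a vertex of $C$ outside $B_3\cup B_4$ may be adjacent to a vertex of $B_3$, where colors $1$ and $4$ can clash.
\item The fallback ``if every edge is bad, $G$ is the Heawood graph'' has no justification. In edge-transitive cubic graphs of girth $6$, such as the M\"obius--Kantor, Pappus and Desargues graphs, all edges look alike. Re-choosing $ab$ gains nothing there, so you need an argument valid for every configuration.
\end{itemize}

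Your scheme is not hopeless. In the Heawood graph the colors force $y$ to be the neighbor of $x$ in $B_4$, and this yields a valid coloring in which $a,b,a_3,a_4,b_3,b_4,x,y$ are all CDVs. But you would have to prove it works in general.

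The paper avoids all of this by using the $6$-cycles as the scaffold rather than treating them as an obstacle.
\begin{itemize}
\item If $g(G)\ge 7$, Theorem~\ref{t:girth7} applies.
\item Otherwise, take a $6$-cycle $a_1a_2\cdots a_6a_1$ with external neighbors $b_1,\dots,b_6$. These are distinct, and the only possible edges among them are $b_1b_4$, $b_2b_5$, $b_3b_6$.
\item An explicit coloring of the cycle and the $b_i$ makes six cycle vertices CDVs: two for each of two colors and one for each of the other two.
\item The two missing CDVs are created at $b_1$ and $b_4$. If $b_1b_4\in E(G)$, both third neighbors $c_1,c_4$ get color $2$. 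Otherwise, $N(b_1)\setminus\{a_1\}$ and $N(b_4)\setminus\{a_4\}$ are colored with $\{1,2\}$. This works because these sets share at most one vertex and the edges between them form a matching.
\end{itemize}
Two short cases and the greedy completion finish the proof.
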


\begin{proof}
If the girth of $G$ is at least $7$, then the result follows from Theorem~\ref{t:girth7}. We may therefore assume that the girth of $G$ is exactly $6$.
We will construct a proper 4-coloring of $G$ with at least two CDVs of each color.

Let $C=a_1a_2a_3a_4a_5a_6a_1$ be a $6$-cycle in $G$.
Since $G$ is $3$-regular, each vertex $a_i$ has exactly one neighbor outside $C$. Denote this neighbor by $b_i$. 
The vertices $b_1,\ldots,b_6$ are pairwise different, since if $b_i=b_j$ for some $i\neq j$, then this common neighbor together with a shortest path between $a_i$ and $a_j$ on $C$ would give a cycle of length at most $5$. 
Moreover, an edge between two vertices $b_i$ and $b_j$ is possible only when $a_i$ and $a_j$ are opposite vertices of the $6$-cycle, since otherwise the edge $b_ib_j$ together with a shortest path between $a_i$ and $a_j$ on $C$ would give a cycle of length at most $5$.
Thus the only possible edges among the $b_i$'s are $b_1b_4$, $b_2b_5$, and $b_3b_6$.
We consider two cases based on whether $b_1$ is adjacent to $b_4$.

\smallskip

First assume that $b_1b_4\in E(G)$. 
Let $c_1$ be the third neighbor of $b_1$ (distinct from $a_1$ and $b_4$) and let $c_4$ be the third neighbor of $b_4$ (distinct from $a_4$ and $b_1$).
The vertices $c_1$ and $c_4$ are different, since otherwise $b_1b_4c_1b_1$ would be a triangle. 
They are also nonadjacent, since an edge $c_1c_4$ would give the $4$-cycle $c_1b_1b_4c_4c_1$.
Assign colors $1,2,3,1,2,4$ to vertices $a_1,a_2,a_3,a_4,a_5,a_6$, respectively.
Assign colors $3,4,4,4,3,3$ to $b_1,b_2,b_3,b_4,b_5,b_6$, also respectively.
Assign color $2$ to both $c_1$ and $c_4$.
The coloring is proper so far.
Moreover, $a_1$ and $a_4$ are CDVs of color 1, $a_2$ and $a_5$ are CDVs of color 2, $a_3$ and $b_1$ are CDVs of color 3, and $a_6$ and $b_4$ are CDVs of color 4.

\smallskip

Next assume that $b_1b_4\not\in E(G)$.
Assign colors $3,2,1,4,2,1$ to vertices $a_1,a_2,a_3,a_4,a_5,a_6$, respectively.
Assign colors $4,4,3,3,3,4$ to $b_1,b_2,b_3,b_4,b_5,b_6$, also respectively.
The coloring is proper so far.
Moreover, $a_3$ and $a_6$ are CDVs of color 1, $a_2$ and $a_5$ are CDVs of color 2, $a_1$ is a CDV of color 3, and $a_4$ is a CDV of color 4.
For each $i\in\{1,4\}$, let $S_i=N(b_i)\setminus\{a_i\}$.
Since $G$ is $3$-regular, each set $S_i$ contains exactly two vertices.
Note that $S_1\neq S_4$, since otherwise there would be a 4-cycle.
So either they have a common vertex or they are disjoint.

Suppose that $S_1$ and $S_4$ both contain a vertex $z$.
Let $x$ and $y$ be the other vertices in $S_1$ and $S_4$, respectively.
Note that $x$ and $y$ cannot be adjacent, since otherwise $xyb_4zb_1x$ would be a 5-cycle.
Give $x$ and $y$ color 1 and give $z$ color 2.
The coloring is still proper.
Moreover, $b_4$ is a (second) CDV of color 3 and $b_1$ is a (second) CDV of color 4.

Now suppose that $S_1$ and $S_4$ are disjoint.
Let $\{x,y\}=S_1$ and $\{z,w\}=S_4$.
Note that a vertex in $S_1$ can be adjacent to at most one vertex in $S_4$, and vice versa, since otherwise we would get a 4-cycle.
Without loss of generality, we can assume $wx$ and $yz$ are not in $E(G)$.
Give $w$ and $x$ color 1 and give $y$ and $z$ color 2.
The coloring is still proper.
Moreover, $b_4$ is a (second) CDV of color 3 and $b_1$ is a (second) CDV of color 4.

\smallskip

Finally, color all remaining vertices greedily. At every step, an uncolored vertex has at most three colored neighbors, while four colors are available. Therefore the coloring can be completed to a proper $4$-coloring of $G$. The previously constructed CDVs remain CDVs. Thus $G$ realizes the sequence $(2,2,2,2)$.
\end{proof}

\medskip

Theorem~\ref{t:cubicGirth6} shows that the girth assumption in Theorem~\ref{t:girth7} can be reduced from $7$ to $6$ when $d=3$. It is natural to ask whether the same is true for every $d\geq 3$. There are some reasons to believe that this may be possible. The proof of Theorem~\ref{t:girth7} relies on the flexibility available when assigning the required colors within the second neighborhoods. As $d$ increases, these sets become larger, providing additional freedom to avoid conflicts. This leads to the following conjecture.

\begin{conjecture}\label{c:girth6}
Let $d\geq 3$ and let $G$ be a $d$-regular graph of girth $g(G)\ge 6$.
Then $G$ realizes the sequence $(2^{d+1})$.
\end{conjecture}

Our results %which results? add reference(s)?
show that girth $6$ and $7$ still allow local configurations which may obstruct sequence b-colorings corresponding to sequences with larger entries. We now return to the question raised at the beginning of the section. If the girth is at least $8$, the neighborhoods up to distance 3 from a chosen vertex do not contain the conflicts which appeared before. This allows us to extend Kouider's construction to the entire second neighborhood and realize the sequence $(d^d,1)$.

\begin{theorem}\label{t:girth8}
Let $d\geq 2$ and let $G$ be a $d$-regular graph of girth $g(G)\ge 8$.
Then $G$ realizes the sequence $(d^d,1)$.    
\end{theorem}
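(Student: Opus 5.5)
We would handle $d=2$ separately and, for $d\ge3$, carry Kouider's construction one level deeper: colour the ball of radius $3$ around a chosen vertex so that every vertex at distance at most $2$ is a CDV, then finish greedily.

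For $d=2$, $G$ is a cycle $C_n$ with $n\ge 8$, and $(d^d,1)=(2,2,1)$. We read this off Proposition~\ref{p:mod12}: for every $n\ge 8$ some sequence in the realization table dominates $(2,2,1)$. For example, $n=8$ ($j=8$, $k=0$) gives $(2,2,2)$, and for $n\ge 12$ the entries with $i=1$ already have second and third terms at least $2$. This only needs a row-by-row check, which is routine.

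For $d\ge 3$, we fix a vertex $x$ and let $L_1=N(x)=\{x_1,\ldots,x_d\}$, $L_2$ the vertices at distance $2$, and $L_3$ those at distance $3$. The first step is to show that girth at least $8$ forces the subgraph induced on $\{x\}\cup L_1\cup L_2\cup L_3$ to be a tree rooted at $x$. Two distinct parents of a vertex in $L_j$ ($j\le3$) would close a cycle of length at most $6$. An edge inside $L_j$, or an unexpected edge between consecutive layers, would close a cycle of length at most $2j+1\le 7$. It follows that every vertex of $L_1\cup L_2$ has exactly $d-1$ children, and that the only colored neighbours a vertex ever has during the construction are its parent and its children.

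We then colour the tree top-down. Give $x$ colour $d+1$ and $x_i$ colour $i$. The children $B_i$ of $x_i$ get the colours $\{1,\ldots,d\}\setminus\{i\}$, one each. For $y\in B_i$ with colour $c$, its $d-1$ children get the colours $\{1,\ldots,d+1\}\setminus\{c,i\}$, one each. Properness holds because each vertex differs from its parent, and siblings are nonadjacent since the ball is an induced tree. By construction $x$, every $x_i$, and every $y\in L_2$ sees all $d+1$ colours in its closed neighbourhood.

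Next we count CDVs. Colour $d+1$ has the CDV $x$. Each colour $c\le d$ has the CDV $x_c$, plus, for each of the $d-1$ indices $i\ne c$, the unique vertex of $B_i$ coloured $c$. That gives $d$ CDVs of colour $c$, realizing $(d^d,1)$ after reordering the colours.

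Finally, we colour the remaining vertices greedily: each has at most $d$ neighbours and $d+1$ colours are available. This can only add colours to closed neighbourhoods, so all CDVs survive. The main point requiring care is the tree claim, in particular excluding edges between two vertices of $L_3$, which would give a cycle of length at most $7$. That is exactly where girth $8$, rather than $7$, is used.
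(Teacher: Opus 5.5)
Your proof is correct and is essentially the paper's argument. You colour $x$, its neighbours, the sets $B_i$ and then the children of each vertex of $B_i$ exactly as the paper does, so that every vertex within distance $2$ of $x$ is a CDV, and you use girth $8$ precisely to exclude the $7$-cycles that would come from edges between $L_3$-vertices descending from different $x_i$'s; the only deviation, splitting off $d=2$ via Proposition~\ref{p:mod12}, is valid but unnecessary, since the same construction works verbatim for $d=2$ and the paper treats all $d\ge 2$ uniformly.
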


\begin{proof}
We construct a proper $(d+1)$-coloring of $G$ which realizes $(d^d,1)$.

Choose a vertex $x\in V(G)$ and write $N(x)=\{x_1,\ldots,x_d\}$. Assign color $d+1$ to $x$ and color $i$ to $x_i$ for each $i\in\{1,\ldots,d\}$. Thus $N[x]$ contains every color and $x$ is a CDV of color $d+1$. 

For each $i\in\{1,\ldots,d\}$, let $B_i=N(x_i)\setminus\{x\}$. Since $G$ is $d$-regular, each set $B_i$ contains exactly $d-1$ vertices. The sets $B_1,\ldots,B_d$ are pairwise disjoint. Indeed, if a vertex belonged to both $B_i$ and $B_j$, where $i\neq j$, then it would form a $4$-cycle together with $x_i$, $x$, and $x_j$. Moreover, each set $B_i$ is independent, since an edge between two vertices of some $B_i$ would form a triangle with $x_i$. There are also no edges between $B_i$ and $B_j$ for distinct $i$ and $j$, since such an edge would form a $5$-cycle through $x_i$, $x$, and $x_j$. For each $i\in\{1,\ldots,d\}$, color the vertices of $B_i$ so that each color in $\{1,\ldots,d\}\setminus\{i\}$ is used exactly once. This gives a proper coloring of all vertices at distance at most $2$ from $x$. For each $i\in\{1,\ldots,d\}$, $N[x_i]$ contains color $i$ on $x_i$, color $d+1$ on $x$, and every other color in $\{1,\ldots,d\}$ on the vertices of $B_i$. Hence each vertex $x_i$ is a CDV.

We next make all of the vertices in the $B_i$'s into CDVs. 
For each $i\in\{1,\ldots,d\}$ and each $j\in\{1,\ldots,d\}\setminus\{i\}$, let $u_{ij}$ be the vertex of $B_i$ having color $j$ and set $C_{ij}=N(u_{ij})\setminus\{x_i\}$.
The sets $C_{ij}$ are pairwise disjoint:
a common vertex of $C_{ij}$ and $C_{ij'}$ with $j\neq j'$ would form a $4$-cycle with $u_{ij}$, $x_i$, and $u_{ij'}$;
a common vertex of $C_{ij}$ and $C_{i'j'}$ with $i\neq i'$ would form a 6-cycle through $u_{ij}$, $x_i$, $x$, $x_{i'}$ and $u_{i'j'}$.
There are no edges within or among the $C_{ij}'s$:
an edge between two vertices of a single $C_{ij}$ would form a triangle with $u_{ij}$;
an edge between vertices of $C_{ij}$ and $C_{ij'}$ with $j\neq j'$ would form a 5-cycle with $u_{ij}$, $x_i$ and $u_{ij'}$;
an edge between vertices of $C_{ij}$ and $C_{i'j'}$ with $i\neq i'$ would form a 7-cycle with $u_{ij}$, $x_i$, $x$, $x_{i'}$ and $u_{i'j'}$.
All these possibilities are excluded because the girth is at least $8$.
Color the vertices of each $C_{ij}$ with the colors in $\{1,\ldots,d+1\}\setminus\{i,j\}$, using each color exactly once.
This is possible because, due to $G$ being $d$-regular, there are exactly $d-1$ vertices in $C_{ij}$.
The coloring remains proper due to our observations about edge absences.
Now $N[u_{ij}]$ contains color $j$ on $u_{ij}$ itself, color $i$ on $x_i$, and every color in $\{1,\ldots,d+1\}\setminus\{i,j\}$ on the vertices of $C_{ij}$.
Hence each vertex in each set $B_i$ is a CDV.

We now count the CDVs of each color. Fix a color $j\in\{1,\ldots,d\}$. The vertex $x_j$ is a CDV of color $j$. Moreover, color $j$ appears exactly once in every set $B_i$ with $i\neq j$. Since there are $d-1$ such sets, color $j$ has exactly $d-1$ additional CDVs in the second neighborhood of $x$. Therefore color $j$ has at least $d$ CDVs in total. The vertex $x$ is a CDV of color $d+1$.

Finally, color all remaining vertices greedily. At every step, an uncolored vertex has at most $d$ colored neighbors, while $d+1$ colors are available. Therefore the coloring can be completed to a proper $(d+1)$-coloring of $G$, and the previously constructed CDVs remain CDVs. Thus each of the colors $1,\ldots,d$ contains at least $d$ CDVs, and color $d+1$ contains at least one CDV. Hence $G$ realizes the sequence $(d^d,1)$.
\end{proof}

%-----------------
\section{One additional color-dominating vertex}\label{s:oneAdd}
%-----------------

The sequence $(1^\infty)$ corresponds exactly to the classical notion of a b-coloring. Indeed, every graph admits a b-coloring with $\varphi(G)$ colors and therefore realizes the sequence $(1^{\varphi(G)})$. The next natural question is what happens if we require only one additional color-dominating vertex. This leads to the sequence $(2,1^\infty)$. According to Definition~\ref{d:sequence}, we ask whether a graph realizes a sequence of the form $(2,1^k)$ for some positive integer $k$. It is then natural to ask which graphs realize this sequence and, more importantly, whether the graphs that do not realize it can be characterized.

The complete graphs $K_1$ and $K_2$ clearly do not realize $(2,1^\infty)$. The graph $K_1$ has only one vertex, while the unique b-coloring of $K_2$ contains exactly one CDV of each color. As these are the only connected graphs on fewer than three vertices, we assume throughout this section that $|V(G)|\ge 3$.

Some positive results are already known from the previous sections. In particular, Theorem~\ref{t:girth6} shows that every $d$-regular graph with $d\ge 2$ and girth at least $6$ realizes $(2,1^d)$, and therefore also realizes $(2,1^\infty)$.

Another important class is formed by connected bipartite graphs. Every such graph admits a proper coloring with two colors. Since it has at least three vertices, one of the two color classes contains at least two vertices. Moreover, every vertex is a CDV, so this coloring realizes the sequence $(2,1)$. Therefore, it remains to study non-bipartite graphs $G$, i.e., the graphs with $\chi(G)\geq3$. We first consider graphs with chromatic number $3$.

For nonnegative integers $r$, $s$, and $t$, let $T_{r,s,t}$ denote the graph obtained from a triangle (the complete graph $K_3$) by attaching $r$, $s$, and $t$ leaves to its three vertices, respectively. Every graph $T_{r,s,t}$ has $\chi(T_{r,s,t})=\varphi(T_{r,s,t})=3$. Moreover, no graph from this family realizes the sequence $(2,1,1)$, since the only vertices that can be CDVs are the three vertices of the triangle.

Another example is the cycle $C_5$, which also has $\chi(C_5)=\varphi(C_5)=3$. Up to a permutation of the colors, its only proper coloring with three colors is obtained by assigning the colors $1,2,1,2,3$ consecutively around the cycle. In this coloring, there is exactly one CDV of each color. Therefore, $C_5$ does not realize the sequence $(2,1,1)$.

Since both $T_{r,s,t}$ and $C_5$ have chromatic number and b-chromatic number equal to $3$, the only possible candidate is the sequence $(2,1,1)$. As neither graph realizes this sequence, neither realizes $(2,1^\infty)$. The next theorem shows that these are the only such graphs.

\begin{theorem}\label{t:chi3}
Let $G$ be a connected graph with $\chi(G)=3$. 
Then $G$ realizes the sequence $(2,1^\infty)$ if and only if $G$ is neither $C_5$ nor $T_{r,s,t}$ for any nonnegative integers $r$, $s$, and $t$. 
\end{theorem}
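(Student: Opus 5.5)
The plan is to prove the two directions separately. The ``only if'' direction is already settled by the discussion preceding the statement. Both $C_5$ and every $T_{r,s,t}$ have $\chi=\varphi=3$, so $(2,1,1)$ is the only candidate sequence, and neither graph realizes it. For the ``if'' direction it suffices to produce a proper $3$-coloring of $G$ in which some color class has at least two CDVs. Any $\chi(G)$-coloring is automatically a b-coloring (shown in Section~\ref{s:defs}), so such a coloring realizes $(2,1,1)$, and hence $(2,1^\infty)$. I do not intend to use colorings with more than $3$ colors.

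The first step is an extremal choice. Among all proper $3$-colorings of $G$, pick one minimizing the size of one class, say color $3$. If some vertex of color $3$ were not a CDV, it would miss color $1$ or $2$ in its neighborhood, and we could recolor it to that color. This contradicts minimality unless the class becomes empty, and it cannot become empty because $\chi(G)=3$. Hence every vertex of color $3$ is a CDV. So if this minimum is at least $2$, we are done. We may therefore assume there is a vertex $v$ such that $H=G-v$ is bipartite. The $3$-colorings we consider then give $v$ color $3$ and $2$-color each component of $H$ independently, with the two colors allowed to be swapped per component. Since $G$ is not bipartite, $v$ has two neighbors at odd distance parity inside some component of $H$. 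In any such coloring, $v$ therefore sees both colors $1$ and $2$.

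The second step is to exploit the freedom of swapping colors per component and of choosing which vertex plays the role of $v$. A neighbor $u$ of $v$ with color $1$ is a CDV exactly when $u$ has a neighbor in $H$, since that neighbor has color $2$. The same holds with colors $1$ and $2$ exchanged. Call such neighbors of $v$ \emph{inner neighbors}. If, in some coloring, two inner neighbors of $v$ receive the same color, we are done. Inner neighbors lying in components of $H$ that contain no odd-cycle-forming pair can be flipped freely, so they can be matched in color with another inner neighbor. Thus the obstruction forces $v$ to have exactly one inner neighbor of each color in every admissible coloring. This means $v$ has exactly two inner neighbors $p,q$, lying in one component of $H$ at odd distance from each other, and all other neighbors of $v$ are leaves. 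I would next replace $v$ by $p$ or $q$, each of which is also a vertex whose deletion may leave a bipartite graph, and apply the same conclusion to them. I would also use a second recoloring move: a vertex $w\neq v$ of color $1$ whose whole neighborhood has color $2$ can be recolored to $3$ when it is nonadjacent to $v$. Combining this with a simultaneous swap of colors in a subset of $H$ should create a second CDV of color $3$, unless the odd cycle through $v$ is very short and all remaining vertices are pendant.

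The main obstacle is this final structural case analysis. The goal is to show that the only survivors are the triangle with pendant leaves at its three vertices, namely $T_{r,s,t}$, and the bare $5$-cycle. I would split on the length of a shortest odd cycle $C$ through $v$. If $|C|\ge 7$, then recoloring along $C$ in the pattern $1,2,3,1,2,\ldots$ should directly give two CDVs of one color. The delicate point there is to extend the coloring to the rest of $G$ without losing those CDVs, which I would handle by coloring $C$ first and then extending through $H$ using its bipartite structure. If $|C|=5$, any extra vertex or chord attached to $C$ should allow the pattern $1,2,1,2,3$ to be adjusted so that a second CDV appears; otherwise $G=C_5$. If $|C|=3$, any vertex at distance $2$ from the triangle, or any additional edge, should yield a second CDV via the leaf argument above; otherwise $G=T_{r,s,t}$. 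Checking these local configurations exhaustively is where I expect most of the work to lie.
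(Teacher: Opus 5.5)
\textbf{Verdict: there is a genuine gap.} Your first two steps are correct, but the endgame that pins down the exceptional graphs is not proved.

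The opening is sound and differs from the paper's route. The paper fixes the three unique CDVs $x,y,z$ of a $3$-colouring and splits on whether they induce $K_3$, $P_3$, $\overline{P_3}$ or $\overline{K_3}$. You instead minimise a colour class, which does force every vertex of that class to be a CDV, so you may assume $G-v$ is bipartite. Your inner-neighbour count then correctly shows that, unless you are done, $v$ has exactly two non-leaf neighbours $p,q$ and only leaves besides.

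Everything after that is not established:
\begin{itemize}
\item You only say $G-p$ ``may'' be bipartite.
\item The ``second recolouring move combined with a simultaneous swap in a subset of $H$'' is never specified.
\item The final split on $|C|$ is a list of things that ``should'' happen.
\end{itemize}
In the $|C|\ge 7$ branch you propose colouring $C$ by $1,2,3,1,2,\dots$ and then extending ``through $H$ using its bipartite structure''. Once colour $3$ sits on vertices of $H$, the colouring of $C-v$ is no longer the bipartition colouring, and nothing you say guarantees it extends to the rest of $G$.

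The missing idea is already half-present in your remark about replacing $v$ by $p$ or $q$. Since $G-v$ is bipartite, every odd cycle passes through $v$. It must enter and leave $v$ through non-leaf neighbours, that is, through $p$ and $q$. Hence $G-p$ and $G-q$ \emph{are} bipartite, and your inner-neighbour conclusion applies to them. Iterate this: each newly reached vertex again lies on every odd cycle, so it again has exactly two non-leaf neighbours. The walk $v,p,\dots$ therefore cannot revisit any vertex except $v$, and it closes up at $v$ via $q$ into a cycle $Z$ whose vertices have only leaves as further neighbours. By connectivity $G$ is $Z$ with pendant leaves, and $Z$ is odd because $G$ is not bipartite.

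The cases are then immediate and need no extension argument:
\begin{itemize}
\item If $|Z|=3$, then $G=T_{r,s,t}$.
\item If $|Z|=n\ge 7$, keep $v$ coloured $3$ and colour $Z-v=c_1\cdots c_{n-1}$ alternately with $c_1$ coloured $1$ (leaves coloured properly). Recolour $c_3$ with $3$, which is your own second move. Now $v$ and $c_1$ are still CDVs of colours $3$ and $1$, and both $c_2$ and $c_{n-1}$ are CDVs of colour $2$.
\item If $|Z|=5$ and there is a leaf at $w$, colour $Z$ by $1,2,3,1,2$ starting at $w$ and give that leaf colour $3$. Then $w$ and the fourth vertex are CDVs of colour $1$.
\item The bare $C_5$ is the remaining exception.
\end{itemize}
Completed this way, your route is a shorter, more structural alternative to the paper's four-case analysis.
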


\begin{proof}
We first prove the forward implication by contraposition. Suppose that $G$ is either $C_5$ or a graph $T_{r,s,t}$ for some nonnegative integers $r$, $s$, and $t$. As shown before the theorem, both $C_5$ and every graph $T_{r,s,t}$ have chromatic number and b-chromatic number equal to $3$, but do not realize the sequence $(2,1,1)$. Since this is the only possible sequence of the form $(2,1^k)$ that they could realize, neither $C_5$ nor any graph $T_{r,s,t}$ realizes $(2,1^\infty)$.

For the converse, suppose that $G$ is a connected graph with $\chi(G)=3$ which is neither $C_5$ nor any graph $T_{r,s,t}$. We show that $G$ realizes $(2,1,1)$, and hence also $(2,1^\infty)$.

Since $\chi(G)=3$, there exists a proper coloring of $G$ using three colors.
As discussed in Section~\ref{s:defs}, this must be a b-coloring.
Consider such a coloring.
If one of the three color classes already contains at least two CDVs, then this coloring realizes $(2,1,1)$, and hence also $(2,1^\infty)$. Therefore, we may assume that each color class contains exactly one CDV. Let $x$, $y$, and $z$ denote the CDVs of colors $1$, $2$, and $3$, respectively. The subgraph induced by these three vertices is one of $K_3$, $P_3$, $\overline{P_3}$, or $\overline{K_3}$. We consider these four cases separately.

\smallskip

Suppose first that the vertices $x$, $y$, and $z$ induce $K_3$. If every neighbor of $x$, $y$, and $z$ outside the induced $K_3$ is a leaf, then, since $G$ is connected, $G$ is obtained from the induced $K_3$ by attaching leaves to its vertices. Hence $G$ is $T_{r,s,t}$ for some nonnegative integers $r$, $s$, and $t$, contrary to our assumption. 
Therefore, at least one of the vertices $x$, $y$, and $z$ has a non-leaf neighbor outside the induced $K_3$. 
Permuting colors if necessary,
let $u$ be a non-leaf neighbor of $z$ such that $u$ has color $1$. 
Since $u$ is not a leaf, it has a neighbor $v\neq z$.
As $u$ is not a CDV, it has no neighbor of color $2$, so
$v$ must
have color $3$.
This means that $v\not\in\{x,y\}$.
Since $v$ is not the 
unique
CDV of color $3$, it has no neighbor of color $2$.
We can thus recolor $v$ with color $2$ while preserving a proper coloring.
This keeps $x$, $y$ and $z$ as CDVs and makes $u$ a second CDV of color $1$.
The modified coloring thus realizes $(2,1,1)$ and hence also $(2,1^\infty)$.

\smallskip
Suppose next that $x$, $y$ and $z$ induce $P_3$.
Permuting colors if necessary, we may assume that $y$ is adjacent to $x$ and $z$.
(Recall that $x$, $y$ and $z$ have colors 1, 2 and 3, respectively.)
Since $x$ is a CDV, it is adjacent to a vertex $u$ that has color 3.
Similarly, $z$ is adjacent to a vertex $v$ that has color 1.
Let $A$ denote $\{u,x,y,z,v\}$ and set $A'=V(G)\setminus A$.
We proceed based on whether or not $u$ is adjacent to $v$.

Suppose that $u$ is adjacent to $v$, so $A$ induces a 5-cycle.
Since $G$ is assumed not to be $C_5$ itself, $A'$ is nonempty.
We distinguish three cases:
a1) at least one vertex in $A$ has a neighbor in $A'$ that is adjacent to another vertex in $A'$;
a2) no vertex in $A'$ has neighbors in both $A$ and $A'$ but at least one pair of vertices in $A$ have a common neighbor in $A'$;
a3) every vertex in $A'$ is a leaf.
In each case, we will show that $G$ can be recolored to realize $(2,1,1)$.

a1)
Suppose vertex $w_1\in A$ is adjacent to $w_2\in A'$, which is adjacent to $w_3\in A'$.
Permuting colors if necessary, we may assume that $w_1$ has color 1 and $w_2$ has color 2.
Since $w_2$ is not a CDV, $w_3$ must have color 1.
Since $w_3$ is not a CDV, it cannot have any neighbors of color 3.
We can thus recolor $w_3$ with color 3 while preserving properness of the coloring or changing the CDV status of $x$, $y$ or $z$.
Moreover, this recoloring turns $w_2$ into an extra CDV, as required.

a2)
Such a pair of vertices must have the same color, since otherwise the common neighbor would be a CDV.
Swapping colors 1 and 3 if necessary, we may assume that $x$ and $v$ are both adjacent to $w\in A'$.
We will recolor $G$.
Give vertices $u$, $x$, $y$, $z$, $v$ and $w$ the colors 1, 2, 3, 1, 3 and 1, respectively.
Any remaining vertices can be colored greedily since they are either leaves or perhaps adjacent only to $u$ and $z$.
Now we have a proper coloring where $u$ and $w$ are CDVs of color 1 and where $x$ and $y$ are CDVs of colors 2 and 3, respectively.
This new coloring thus realizes $(2,1,1)$.

a3)
Suppose $w\in A$ is adjacent to the leaf $w'\in A'$.
We will recolor $G$.
Starting at $w$ and going around the cycle, use the colors $1,2,3,1,2$.
This produces three CDVs in $A\setminus\{w\}$, one of each color.
Now give $w'$ color 3.
Any remaining vertices can be colored without causing conflicts.
Moreover, $w$ is a second CDV of color 1, as required.

Now suppose that $u$ is not adjacent to $v$, so $A$ induces a path.
We again distinguish three cases:
b1) at least one of $u$ or $v$ has a neighbor in $A'$;
b2) vertices $u$ and $v$ are leaves, but another vertex in $A$ has a non-leaf neighbor in $A'$;
b3) no vertex in $A$ has a non-leaf neighbor in $A'$.
We show that in cases b1 and b2, $G$ can be recolored to realize $(2,1,1)$ and that case b3 does not occur.

b1)
Suppose $u$ is adjacent to $w\in A'$.
Since $u$ is not a CDV, $w$ has color 1.
Because $w$ is not a CDV, it cannot have any neighbors of color 2.
We can thus recolor $w$ with color 2 while preserving the propriety of the coloring.
This turns $u$ into an extra CDV, as required.
The case in which $v$ has a neighbor in $A'$ is symmetric.

b2)
Let $w_1\in\{x,y,z\}$ be adjacent to $w_2\in A'$, which is itself adjacent to $w_3\neq w_1$.
Permuting colors as necessary, let $w_1$ have color 1 and $w_2$ have color 2.
Since $w_2$ is not a CDV, $w_3$ must have color 1.
This means that $w_3\in A'$, since $u$ and $v$ are leaves and $w_1$ is the only other vertex in $A$ that has color 1.
Because $w_3$ is not a CDV, it cannot have any neighbors of color 3.
Recolor $w_3$ with color 3.
This preserves the propriety of the coloring.
It also keeps $x$, $y$ and $z$ as CDVs.
Moreover, it turns $w_2$ into an extra CDV, as required.

b3)
In this case, $G$ is bipartite, contradicting the assumption $\chi(G)=3$.

\smallskip

Suppose now that $x$, $y$, and $z$ induce $\overline{P_3}$.
Recall that these vertices are so named so that they have colors 1, 2 and 3, respectively.
Permuting colors if necessary, we may assume that $y$ and $z$ are adjacent, while $x$ is adjacent to neither of them.
Let $X_2$ and $X_3$ denote the sets of neighbors of $x$ with colors $2$ and $3$, respectively. Since $x$ is a CDV, both sets are nonempty. Let $Y_1$ denote the set of neighbors of $y$ with color $1$, and let $Z_1$ denote the set of neighbors of $z$ with color $1$. These sets are also nonempty because $y$ and $z$ are CDVs. 

We may assume that every neighbor of $y$ different from $z$ has color $1$ and is thus in $Y_1$.
Indeed, suppose that $v\neq z$ is a neighbor of $y$ with color $3$.
Since $v$ is not a CDV and is adjacent to $y$ of color $2$, $v$ has no neighbor of color $1$.
We may thus give $v$ color 1 without disturbing the propriety of the coloring or the CDV status of $x$, $y$ and $z$.
By symmetry, we may also assume that every neighbor of $z$ different from $y$ belongs to $Z_1$.

We may also assume that we have accounted for all of the vertices, so $V(G)=\{x,y,z\}\cup X_2\cup X_3\cup Y_1\cup Z_1$.
For example, suppose that a vertex $v\in X_2$ has a neighbor $u\notin \{x,y,z\}\cup X_2\cup X_3\cup Y_1\cup Z_1$. 
Since $v$ (with color 2) is not a CDV and is adjacent to $x$ (of color $1$), $u$ must have color $1$.
Moreover, $u$ is not a CDV and is adjacent to $v$ of color $2$, so it has no neighbor of color $3$. We may therefore recolor $u$ with color $3$, making $v$ a second CDV of color $2$. 
This modified coloring realizes $(2,1,1)$, and hence also $(2,1^\infty)$.
By symmetry, the same argument applies to every vertex in $X_3$, $Y_1$, and $Z_1$.

Let us consider the edges among the four sets $X_2$, $X_3$, $Y_1$ and $Z_1$. 
There are no edges inside any one of them, since each set is monochromatic.
There are also no edges between $Y_1$ and $Z_1$, since all vertices in these two sets have color $1$. 
An edge between $X_2$ and $X_3$ would make both endpoints CDVs.
An edge between $X_2$ and $Z_1$ would make its endpoint in $Z_1$ a second CDV of color $1$, while an edge between $X_3$ and $Y_1$ would make its endpoint in $Y_1$ a second CDV of color $1$.
Therefore, the only possible edges among these sets are between $X_2$ and $Y_1$ and between $X_3$ and $Z_1$.
There must be at least one edge of the latter type.
Indeed, if no vertex in $X_3$ were adjacent to any vertex in $Z_1$, then we could properly recolor the vertices so those in $X_3\cup X_2\cup\{y\}\cup Z_1$ have color 1 and the rest have color 2.
But this would show that $G$ is bipartite, contradicting the assumption $\chi(G)\geq3$.

We now recolor some vertices.
Assign color 2 to every vertex in $Z_1$, color 1 to $z$ and color 3 to $y$.
By the above description of the possible edges of $G$, this coloring is proper. 
Choose an edge $uv$ with $u\in X_3$ and $v\in Z_1$.
Then $u$ is a CDV of color $3$, since it is adjacent to $x$ of color $1$ and to $v$ of color $2$, while $v$ is a CDV of color $2$, since it is adjacent to $z$ of color $1$ and to $u$ of color $3$. 
We have not recolored the neighbors of $x$, so it is still a CDV of color 1.
However, $z$ is now adjacent to $y$ of color 3 and $v$ of color 2, so it is a second CDV of color 1.
This new coloring realizes $(2,1,1)$, and hence also $(2,1^\infty)$.

\smallskip

Suppose finally that the vertices $x$, $y$, and $z$ induce $\overline{K_3}$. Recall that $x$, $y$, and $z$ have colors $1$, $2$, and $3$, respectively. Let $X_2$ and $X_3$ denote the sets of neighbors of $x$ with colors $2$ and $3$, respectively. Similarly, let $Y_1$ and $Y_3$ denote the sets of neighbors of $y$ with colors $1$ and $3$, and let $Z_1$ and $Z_2$ denote the sets of neighbors of $z$ with colors $1$ and $2$. Since $x$, $y$, and $z$ are CDVs, all six sets are nonempty. 
These sets are pairwise disjoint, since any vertex in two of them would be an extra CDV.

As in the previous case, we consider whether there are any more vertices.
Suppose, for example, that a vertex $v\in X_2$ has a neighbor
$u\notin\{x,y,z\}\cup X_2\cup X_3\cup Y_1\cup Y_3\cup Z_1\cup Z_2$.
Since $v$ is not a CDV and is adjacent to $x$ of color $1$, it has no neighbor of color $3$. Thus $u$ has color $1$. Moreover, $u$ is not a CDV and is adjacent to $v$ of color $2$, so it has no neighbor of color $3$. We may therefore recolor $u$ with color $3$, making $v$ a second CDV of color $2$. This modified coloring realizes $(2,1,1)$, and hence also $(2,1^\infty)$. The same argument applies to every vertex in the other five sets. Therefore, we may assume that
$V(G)=\{x,y,z\}\cup X_2\cup X_3\cup Y_1\cup Y_3\cup Z_1\cup Z_2$.

There are clearly no edges between vertices of the same color. 
An edge between $X_2$ and $Y_3$ would make its endpoint in $X_2$ a CDV of color $2$.
Analogous statements hold for most of the other pairs.
In the end, we find that the only possible edges among the six sets are between $X_2$ and $Y_1$, between $X_3$ and $Z_1$, and between $Y_3$ and $Z_2$. 
Since $G$ is connected, at least two of these three types of edge must be present.
By symmetry, assume that there is an edge between $X_2$ and $Y_1$ and an edge between $X_3$ and $Z_1$.

We now define a new coloring. Assign color $1$ to $x$, $z$, and every vertex of $Y_1\cup Y_3$. Assign color $2$ to every vertex of $X_2\cup Z_1\cup Z_2$. Finally, assign color $3$ to $y$ and every vertex of $X_3$. By the above description of the possible edges, this coloring is proper. Choose an edge $uv$ with $u\in X_2$ and $v\in Y_1$. Then $v$ is a CDV of color $1$, since it is adjacent to $u$ of color $2$ and to $y$ of color $3$. The vertex $x$ is also a CDV of color $1$, since it has neighbors in both $X_2$ and $X_3$. Similarly, choose an edge $wp$ with $w\in X_3$ and $p\in Z_1$. Then $w$ is a CDV of color $3$, since it is adjacent to $x$ of color $1$ and to $p$ of color $2$. The vertex $p$ is a CDV of color $2$, since it is adjacent to $z$ of color $1$ and to $w$ of color $3$. This new coloring realizes $(2,1,1)$, and hence also $(2,1^\infty)$.
\end{proof}

\medskip

Theorem~\ref{t:chi3} completely characterizes graphs with chromatic number $3$ that do not realize the sequence $(2,1^\infty)$. For graphs with larger chromatic number, such a characterization is currently unknown.
We therefore turn our attention to simple structural conditions that guarantee that a graph does not realize $(2,1^\infty)$. The following proposition provides one such condition.

\begin{proposition}\label{p:no21}
Suppose that $G$ contains a clique of size $k$ and every vertex outside this clique has degree strictly less than $k-1$. Then $G$ does not realize the sequence $(2,1^\infty)$.
\end{proposition}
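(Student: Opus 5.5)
Let $Q$ be the clique of size $k$ and suppose, for a contradiction, that $G$ realizes $(2,1^r)$ for some $r\ge 1$. Fix a b-coloring with $m=r+1$ colors in which some color class contains at least two CDVs and every other color class contains at least one. The whole argument is a count of the vertices that are able to be CDVs.

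First I will establish $m\ge k$. Since $Q$ is a clique, its $k$ vertices receive pairwise distinct colors, so the coloring uses at least $k$ colors.

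Next I will show that every CDV lies in $Q$. A CDV in a coloring with $m$ colors needs a neighbor of each of the other $m-1$ colors, so it has degree at least $m-1\ge k-1$. By hypothesis every vertex outside $Q$ has degree strictly less than $k-1$, so no such vertex can be a CDV.

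Finally I will count. Each of the $m$ color classes contains a CDV, and all CDVs lie in $Q$. Since $|Q|=k\le m$, this forces $m=k$, and each vertex of $Q$ is the unique vertex of $Q$ with its color. So each color class contains exactly one vertex of $Q$, hence at most one CDV. Put differently, the CDVs are at most $k$ vertices while the sequence $(2,1^{k-1})$ demands $k+1$ of them. This contradicts the requirement of two CDVs in some class.

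I see no real obstacle. The only point needing care is the inequality $m\ge k$, which lets the degree hypothesis "strictly less than $k-1$" exclude outside vertices for every admissible number of colors, not only $m=k$. As an alternative, Remark~\ref{r:ms} applies directly once $m\ge k$ is known: then $n_m(G)\le n_k(G)=k<k+1\le \sum s_i$.
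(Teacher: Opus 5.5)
Your proof is correct and follows essentially the same argument as the paper. The clique forces at least $k$ colors, so the degree bound keeps every vertex outside the clique from being a CDV, and counting then forces exactly $k$ colors with a single CDV per class. Your alternative via Remark~\ref{r:ms}, using $n_m(G)\le n_k(G)=k<m+1$, is a valid shortcut that compresses the same count.
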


\begin{proof}
Let $K$ be the clique of size $k$. Since the vertices of $K$ are pairwise adjacent, every proper coloring of $G$ uses at least $k$ colors.

Suppose that $G$ admits a b-coloring with $q \ge k$ colors. Every CDV in this coloring has degree at least $q-1$. Since every vertex outside $K$ has degree strictly less than $k-1$, no vertex outside $K$ can be a CDV. Therefore, all CDVs belong to $K$. As every color class must contain a CDV and $K$ contains only $k$ vertices, we obtain $q\le k$. Hence $q=k$.

The vertices of $K$ receive pairwise different colors. Moreover, each of them is a CDV, since its closed neighborhood contains all $k$ colors used on $K$. On the other hand, no vertex outside $K$ can be a CDV. Thus every color class contains exactly one CDV, and consequently $G$ does not realize $(2,1^\infty)$.
\end{proof}

\medskip

The converse of Proposition~\ref{p:no21} does not hold. In particular, a graph may contain a clique of size $k$ and vertices outside this clique of degree (at least) $k-1$, while still not realizing $(2,1^\infty)$. The graph in Figure~\ref{f:clique} provides such an example. It contains a clique of size $4$, while each of the two remaining vertices has degree $3$ yet the graph can easily be shown not to realize $(2,1^\infty)$.

\begin{figure}[ht]
\centering
\begin{tikzpicture}
\tikzset{V/.style={shape=circle,fill=black,inner sep=2pt,outer sep=0pt}}
\node[V] (a) at (1,0) {};
\node[V] (b) at (2,0) {};
\node[V] (c) at (1,1) {};
\node[V] (d) at (2,1) {};
\draw (a)--(b)--(c)--(d)--(a) (a)--(c) (b)--(d);
\node[V] (e) at (0,1.5) {};
\node[V] (f) at (3,1.5) {};
\draw (a)--(e)--(c) (b)--(f)--(d) (e)--(f);
\end{tikzpicture}
\caption{A counterexample to the converse of Proposition~\ref{p:no21}}
\label{f:clique}
\end{figure}

%-----------------
\section{Concluding remarks and open questions}
%-----------------

In this paper, we introduced sequence b-colorings as a generalization of classical b-colorings in which the required number of CDVs may vary among color classes. We established some fundamental properties of the associated parameters, proved that every sequence is realizable, and showed that the corresponding decision problem is NP-complete. We also investigated sequence b-colorings of regular graphs, including a complete characterization for cycles, and studied colorings requiring only one additional CDV. The results obtained in these directions leave several open questions.

Theorem~\ref{t:girth7} shows that every $d$-regular graph with $d\ge 3$ and girth at least $7$ realizes $(2^{d+1})$. Theorem~\ref{t:cubicGirth6} improves the girth assumption from $7$ to $6$ for cubic graphs. This suggests that the same improvement may hold for every degree at least $3$, as stated in Conjecture~\ref{c:girth6}.

\begin{question}
Does every $d$-regular graph with $d\ge 3$ and girth at least $6$ realize the sequence $(2^{d+1})$?
\end{question}

An affirmative answer would show that the obstruction given by the cycle $C_7$ is specific to the case $d=2$. It would also extend Theorem~\ref{t:cubicGirth6} from cubic graphs to regular graphs of arbitrary degree. More generally, the results of the section on regular graphs determine sufficient girth conditions for several natural sequences. This raises the broader problem of understanding the relationship between the entries in a sequence and the girth required to guarantee its realization.

\begin{question}
Let $d\ge 2$, and let $S$ be a sequence of length $d+1$. What is the minimum integer $g$ such that every $d$-regular graph of girth at least $g$ realizes $S$?
\end{question}

Section~\ref{s:oneAdd} considered the sequence $(2,1^\infty)$, which represents the smallest possible strengthening of the classical b-coloring requirement. Theorem~\ref{t:chi3} gives a complete characterization of connected graphs with chromatic number $3$ that do not realize this sequence. For graphs with larger chromatic number, however, no corresponding characterization is currently known.

\begin{question}
For each integer $k\geq4$, which connected graphs $G$ with $\chi(G)=k$ do not realize the sequence $(2,1^\infty)$?
\end{question}

Finally, most of the results in this paper concern whether a graph $G$ realizes a prescribed sequence. The associated extremal parameters and spectra have received much less attention. For a fixed sequence $S$, the $S$-spectrum records all numbers of colors with which $S$ can be realized, while $\chi_S(G)$ and $\varphi_S(G)$ record its minimum and maximum elements. The results on regular graphs also determine $\varphi_S(G)$ in several cases. Indeed, whenever a $d$-regular graph realizes a sequence of length $d+1$, the general upper bound $\varphi_S(G)\leq\Delta(G)+1=d+1$ implies that $\varphi_S(G)=d+1$. Outside this setting, however, the behaviors of $\chi_S(G)$, $\varphi_S(G)$, and the intermediate elements of the $S$-spectrum remain largely unexplored.

\begin{question} For a fixed sequence $S$, which sets of positive integers can occur as the $S$-spectrum of a graph?
\end{question}

This question contains several related problems. One may study the possible values of $\chi_S(G)$ and $\varphi_S(G)$, find bounds in terms of other graph parameters, or determine these parameters exactly for particular graph classes. It would also be interesting to understand when the $m_S$-degree gives the exact value of $\varphi_S(G)$.

%-----------------
\section*{Acknowledgements}
%-----------------

M.\ Jakovac was supported by the Slovenian Research and Innovation Agency (ARIS) under the grants  P1-0297, N1-0285, N1-0431.

%-----------------
\section*{Declaration of interests}
%-----------------
 
The authors declare that they have no conflict of interest. 

%-----------------
\section*{Data availability}
%-----------------
 
Our manuscript has no associated data.

\end{document}